\numberwithin{equation}{section}
\newtheorem{Theorem}{Theorem}[section]
\newtheorem{Lemma}{Lemma}[section]
\newtheorem{Corollary}{Corollary}[section]
\theoremstyle{definition}
\newtheorem{Definition}{Definition}[section]
\theoremstyle{remark}
\newtheorem{Remark}{Remark}[section]
\newtheorem{Example}{Example}[section]
\newcommand{\sign}{\mathop{\rm sign}\nolimits}
\newcommand{\essinf}{\mathop{\rm ess \, inf}\limits}
\newcommand{\esssup}{\mathop{\rm ess \, sup}\limits}
\dedicatory{Dedicated to Laurent V\'eron}
\author{A.A. Kon'kov}
\address{Department of Differential Equations,
Faculty of Mechanics and Mathematics,
Mo\-s\-cow Lo\-mo\-no\-sov State University,
Vorobyovy Gory,
Moscow, 119992 Russia}
\email{konkov@mech.math.msu.su}
\author{A.E. Shishkov}
\address{
Center of Nonlinear Problems of Mathematical Physics,
RUDN University,
Miklukho-Maklaya str. 6,
Moscow, 117198 Russia;
Institute of Applied Mathematics and Mechanics of NAS of Ukraine,
Dobrovol'skogo str. 1, Slavyansk, 84116 Ukraine
}
\email{aeshkv@yahoo.com}
\thanks{The research is supported by RUDN University, Project 5-100}
\title[On removable singularities]{On removable singularities of solutions of higher order differential inequalities}
\keywords{Higher order differential inequalities; Nonlinearity; Removable singularities}
\subjclass{35B44, 35B08, 35J30, 35J70}
\date{}
\begin{document}

\begin{abstract}
We obtain sufficient conditions for solutions of the $m$th-order differential inequality 
$$
	\sum_{|\alpha| = m}
	\partial^\alpha
	a_\alpha (x, u)
	\ge
	f (x) g (|u|)
	\quad
	\mbox{in } B_1 \setminus \{ 0 \}
$$
to have a removable singularity at zero, 
where $a_\alpha$, $f$, and $g$ are some functions, and 
$B_1 = \{ x : |x| < 1 \}$ is a unit ball in ${\mathbb R}^n$.

Constructed examples demonstrate the exactness of these conditions.
\end{abstract}

\maketitle

\section{Introduction}

We study solutions of the differential inequality
\begin{equation}
	\sum_{|\alpha| = m}
	\partial^\alpha
	a_\alpha (x, u)
	\ge
	f (x) g (|u|)
	\quad
	\mbox{in } B_1 \setminus \{ 0 \}
	\label{1.1}
\end{equation}
of order $m \ge 1$, where $a_\alpha$ are Caratheodory functions such that
$$
	|a_\alpha (x, \zeta)| \le A |\zeta|,
	\quad
	|\alpha| = m,
$$
with some constant $A > 0$ for almost all $x \in B_1$ and for all 
$\zeta \in {\mathbb R}$.
It is assumed that $f$ is a positive measurable function 
and
$g \in C^2 ([0, \infty))$ 
satisfies the conditions
$g (\zeta) > 0$, $g' (\zeta) > 0$, and $g'' (\zeta) > 0$ for all $\zeta \in (0, \infty)$.

As is customary, by $B_r$ we denote an open ball in ${\mathbb R}^n$ 
of radius $r > 0$ centered at zero.
In so doing, by $\alpha = {(\alpha_1, \ldots, \alpha_n)}$ we mean a multi-index with
$|\alpha| = \alpha_1 + \ldots + \alpha_n$
and
$\partial^\alpha = {\partial^{|\alpha|} / (\partial_{x_1}^{\alpha_1} \ldots \partial_{x_n}^{\alpha_n}})$,
$x = {(x_1, \ldots, x_n)}$.

\begin{Definition}\label{D1.1}
A function $u$ is called a weak solution of~\eqref{1.1} if
$u  \in {L_1 (B_1 \setminus B_\varepsilon)}$
and
${f (x) g (|u|)} \in {L_1 (B_1 \setminus B_\varepsilon)}$
for all $\varepsilon \in (0, 1)$ and, moreover,
\begin{equation}
	\int_{B_1}
	\sum_{|\alpha| = m}
	(-1)^m
	a_\alpha (x, u)
	\partial^\alpha
	\varphi
	\,
	dx
	\ge
	\int_{B_1}
	f (x) g (|u|)
	\varphi
	\,
	dx
	\label{1.2}
\end{equation}
for any non-negative function $\varphi \in C_0^\infty (B_1 \setminus \{ 0 \})$.
\end{Definition}

\begin{Definition}\label{D1.2}
A weak solution of~\eqref{1.1} has a removable singularity at zero if
$u  \in {L_1 (B_1)}$, ${f (x) g (|u|)} \in {L_1 (B_1)}$,
and~\eqref{1.2} is valid for any non-negative function $\varphi \in C_0^\infty (B_1)$.
In other words, $u$ is a weak solution of the inequality
$$
	\sum_{|\alpha| = m}
	\partial^\alpha
	a_\alpha (x, u)
	\ge
	f (x) g (|u|)
	\quad
	\mbox{in } B_1.
$$
\end{Definition}

In a similar way, we can define a weak solution (and a weak solution with a removable singularity) 
of the equation  
$$
	\sum_{|\alpha| = m}
	\partial^\alpha
	a_\alpha (x, u)
	=
	f (x) g (|u|)
	\sign u
	\quad
	\mbox{in } B_1 \setminus \{ 0 \}.
$$

In the partial case of the Emden-Fowler nonlinearity $g (t) = t^\lambda$, inequality~\eqref{1.1} takes the form
\begin{equation}
	\sum_{|\alpha| = m}
	\partial^\alpha
	a_\alpha (x, u)
	\ge
	f (x) |u|^\lambda
	\quad
	\mbox{in } B_1 \setminus \{ 0 \}.
	\label{1.3}
\end{equation}

The problem of removability of an isolated singularity for solutions of differential equations and 
inequalities has traditionally attracted the attention of mathematicians.
A wide literature is devoted to this issue~[1--15].
However, most of these papers deal with second-order equations and inequalities~[1--13].
The case of higher order differential operators is studied 
mainly for nonlinearities of the Emden-Fowler type $g (t) = t^\lambda$~\cite{K, KE}.

In the present paper, we obtain sufficient conditions for weak solutions of~\eqref{1.1}
to have a removable singularity at zero.
In so doing, we are not limited to the case of the Emden-Fowler nonlinearity.

We also impose no ellipticity conditions on the coefficients $a_\alpha$ of the differential operator.
Therefore, our results can be applied to a wide class of differential inequalities.
The exactness of these results is demonstrated in Examples~\ref{E2.1}--\ref{E2.4}.

It is interesting that, in the case of the equation
\begin{equation}
	\Delta u = |u|^\lambda \sign u
	\quad
	\mbox{in } B_1 \setminus \{ 0 \},
	\label{C2.1.1}
\end{equation}
the conditions for removability of a singularity obtained in the classical paper of 
L.~V\'eron and H.~Bre\-zis~\cite{BV} coincide with the analogous conditions 
for weak solutions of the inequality
$$
	\Delta u \ge |u|^\lambda \sign u
	\quad
	\mbox{in } B_1 \setminus \{ 0 \}
$$
while the equation
\begin{equation}
	- \Delta u = |u|^\lambda \sign u
	\quad
	\mbox{in } B_1 \setminus \{ 0 \}
	\label{R2.1.1},
\end{equation}
has solutions from $C^2 (B_1 \setminus \{ 0 \})$ with a removable singularity at zero in the weak sense
which are not twice continuously differentiable functions in the whole ball $B_1$
(see Corollary~\ref{C2.1} and Remark~\ref{R2.1}).

We use the following notations. By
$$
	g^* (\xi)
	=
	\left\{
		\begin{aligned}
			&
			\int_{
				g' (0)
			}^{
				\xi
			}
			(g')^{-1} (\zeta)
			\,
			d \zeta,
			&
			&
			\xi > g' (0),
			\\
			&
			0,
			&
			&
			\xi \le g' (0),
		\end{aligned}
	\right.
$$
where $(g')^{-1}$ is the inverse function to $g'$,
we denote the Legendre transformation of the function $g (t) - g (0)$.
In accordance with the Fenchel-Young inequality we have
$$
	a b \le g (a) + g^* (b)
$$
for all real numbers $a \ge 0$ and $b \ge 0$.
In the case of $g (t) = t^\lambda / \lambda$, $\lambda > 1$, 
this inequality obviously takes the form
$$
	a b
	\le 
	\frac{
		1
	}{
		\lambda
	} 
		a^\lambda
	+ 
	\frac{
		\lambda - 1
	}{
		\lambda
	}
		b^{\lambda / (\lambda - 1)}
$$
for all real numbers $a \ge 0$ and $b \ge 0$.

Let us put
$$
	\gamma (\xi)
	=
	\frac{g^* (\xi)}{\xi}.
$$

We assume that there are a real number $\lambda \ge 1$, 
a non-negative measurable function $\rho$, and a positive non-decreasing function $h$
such that
\begin{equation}
	g (\varepsilon r^{m - n} t) 
	\ge 
	\varepsilon^\lambda 
	\rho (r) 
	h (t)
	\label{1.4}
\end{equation}
for all 
$\varepsilon \in (0, 1)$, 
$r \in (0, 1)$,
and 
$t \in (0, \infty)$.

\section{Main results}

\begin{Theorem}\label{T2.1}
Let
\begin{equation}
	\int_1^\infty
	h^{- 1 / (\lambda (m - 1) + 1)} (t)
	t^{1 / (\lambda (m - 1) + 1) - 1}
	\,
	d t
	< 
	\infty
	\label{T2.1.1}
\end{equation}
and
\begin{equation}
	\int_0^1
	r^{n - 1}
	q (r)
	\,
	dr
	=
	\infty,
	\label{T2.1.2}
\end{equation}
where
$$
	q (r)
	=
	\rho (r)
	\frac{
		\essinf_{
			B_1 \cap B_{\sigma r} \setminus B_{r / \sigma}
		} 
		f^\lambda 
	}{
		\esssup_{
			B_1 \cap B_{\sigma r} \setminus B_{r / \sigma}
		} 
		f^{\lambda - 1}
	}
$$
for some real number $\sigma > 1$.
If
\begin{equation}
	\int_{B_1}
	\gamma
	\left(
		\frac{1}{f (x)}
	\right)
	dx
	<
	\infty,
	\label{T2.1.4}
\end{equation}
then any weak solution of~\eqref{1.1} has a removable singularity at zero.
\end{Theorem}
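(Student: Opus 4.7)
My plan is to establish a uniform a priori bound on $\int_{B_1 \setminus B_\varepsilon} f(x) g(|u|)\, dx$ and on $\int_{B_1 \setminus B_\varepsilon} |u|\, dx$ that is independent of $\varepsilon \in (0,1)$. Once such bounds are in hand, the monotone convergence theorem yields $u,\, f g(|u|) \in L^1(B_1)$; a standard cut-off approximation of an arbitrary non-negative $\varphi \in C_0^\infty(B_1)$ by test functions in $C_0^\infty(B_1 \setminus \{0\})$, combined with the uniform integrability just established, then extends the weak inequality \eqref{1.2} to the whole ball, which is exactly removability in the sense of Definition~\ref{D1.2}.

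To obtain the annular a priori bound I would test \eqref{1.2} against $\varphi = \psi^\mu$, where $\psi \in C_0^\infty(B_{\sigma r} \setminus B_{r/\sigma})$ is a standard cutoff with $|\partial^\alpha \psi| \le C r^{-|\alpha|}$, and $\mu \ge m$ is large enough that $|\partial^\alpha \psi^\mu| \le C \psi^{\mu-m} r^{-m}$ for $|\alpha| = m$. Using $|a_\alpha(x,u)| \le A|u|$, this yields
$$
  \int f g(|u|) \psi^\mu \, dx \le C r^{-m} \int |u| \psi^{\mu - m}\, dx.
$$
The Fenchel--Young inequality $a b \le g(a) + g^*(b)$ is then applied to the right-hand side after scaling by a parameter $\tau = \tau(r) > 0$; invoking \eqref{1.4} with the natural change of variable $t = |u|\, r^{n-m}/\tau$ lets the $g(|u|)$ produced by Fenchel--Young be reabsorbed into the left-hand side, leaving a remainder of the form
$$
  C \sum_{|\alpha| = m} \int |\partial^\alpha \psi^\mu|\, \gamma\!\left( \frac{C|\partial^\alpha \psi^\mu|}{f \psi^\mu} \right) dx,
$$
whose total over a dyadic family of scales is controlled by hypothesis \eqref{T2.1.4}.

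The heart of the proof, and the place where \eqref{T2.1.1} and \eqref{T2.1.2} enter, is the scale bookkeeping. After the Fenchel--Young step one obtains an integral inequality linking the local mass $M(r) := \int_{B_{\sigma r}\setminus B_{r/\sigma}} f g(|u|)\, dx$ to the capacity weight $r^{n} q(r)$; the exponent $1/(\lambda(m-1)+1)$ in \eqref{T2.1.1} arises precisely from inverting the monotone relation given by \eqref{1.4} once the parameter $\tau$ has been optimised against $h(t)$. Hypothesis \eqref{T2.1.1} then controls the Legendre-dual contribution from very large values of $|u|$, while the divergence condition \eqref{T2.1.2} forces $M(r) \to 0$ as $r \to 0$: if $u$ were not locally integrable near the origin, summing the annular inequality along a geometric sequence $r_k = \sigma^{-k}$ would produce a lower bound incompatible with the uniform upper bound coming from \eqref{T2.1.4}.

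The main obstacle I anticipate is essentially technical: choosing the Fenchel--Young parameter $\tau = \tau(r)$ in precisely such a way that the exponents produced by \eqref{1.4} match those appearing in \eqref{T2.1.1}, and at the same time verifying that the cross-scale error bounded by \eqref{T2.1.4} really is uniformly controlled across the dyadic family of annuli. Once this coupling is correctly set up, the remaining steps --- extracting a uniform bound for $\int_{B_1} f g(|u|)\, dx$ and $\int_{B_1} |u|\, dx$, and then lifting the weak inequality to an arbitrary $\varphi \in C_0^\infty(B_1)$ by a dominated convergence argument --- are standard.
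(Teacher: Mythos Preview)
Your plan has a genuine gap at the step you yourself flag as ``the main obstacle'': the Fenchel--Young absorption you describe cannot be carried out with the hypotheses available. After scaling, Fenchel--Young gives $r^{-m}|u|\le g(|u|/\tau)+g^*(\tau r^{-m})$, and to reabsorb $g(|u|/\tau)$ into the left-hand side $\int f g(|u|)\psi^\mu$ you would need an \emph{upper} bound of the form $g(\varepsilon s)\le c(\varepsilon)\,g(s)$ with $c(\varepsilon)\to 0$. But hypothesis~\eqref{1.4} is a \emph{lower} bound on $g$ at scaled arguments; it points in the wrong direction for absorption and no $\Delta_2$-type upper bound on $g$ is assumed. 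For the same reason your sentence ``Hypothesis~\eqref{T2.1.1} then controls the Legendre-dual contribution'' is not right: \eqref{T2.1.1} is a condition on $h$, and $h$ is linked to $g$ only through the lower bound~\eqref{1.4}, so it gives no control on $g^*$.

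The paper's argument is structurally different. It does not absorb; instead it runs a contradiction/blow-up scheme on the monotone quantity $E(r)=\int_{B_{1/2}\setminus B_r} f g(|u|)$. The testing inequality (your first displayed estimate) is combined with \emph{Jensen's} inequality for the convex function $g$ --- not Fenchel--Young --- and \eqref{1.4} is used, in its natural direction, to bound $g$ of the weighted average of $|u|$ from \emph{below} by $h(kE)$. This produces a recursive growth estimate $E(r_{i+1})-E(r_i)\ge C\,(r_i-r_{i+1})^{\lambda(m-1)+1} r_i^{-\lambda(m-1)+n-1}\rho\cdot(\cdots)\,h(kE(r_{i+1}))$ along a carefully chosen doubling sequence $r_i$, and the exponent $1/(\lambda(m-1)+1)$ in~\eqref{T2.1.1} arises from this recursion, not from optimising a Fenchel--Young parameter. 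A dichotomy over dyadic shells then shows that if the singularity were not removable, the finiteness in~\eqref{T2.1.1} would force $\int_0^1 r^{n-1}q(r)\,dr<\infty$, contradicting~\eqref{T2.1.2}. Condition~\eqref{T2.1.4} and Fenchel--Young enter only \emph{after} $fg(|u|)\in L^1(B_1)$ is established, solely to deduce $u\in L^1(B_1)$ and to justify the cut-off limit --- a much more modest role than the one you assign them.
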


In the case of the Emden-Fowler nonlinearity, Theorem~\ref{T2.1} implies  the following assertion.

\begin{Theorem}\label{T2.2}
Let $\lambda > 1$ and
$$
	\int_0^1
	r^{\lambda (m - n) + n - 1}
	z (r)
	\,
	dr
	=
	\infty,
$$
where
$$
	z (r)
	=
	\frac{
		\essinf_{
			B_1 \cap B_{\sigma r} \setminus B_{r / \sigma}
		} 
		f^\lambda 
	}{
		\esssup_{
			B_1 \cap B_{\sigma r} \setminus B_{r / \sigma}
		} 
		f^{\lambda - 1}
	}
$$
for some real number $\sigma > 1$.
If
$$
	\int_{B_1}
	f^{- 1 / (\lambda - 1)} (x)
	\,
	dx
	<
	\infty,
$$
then any weak solution of~\eqref{1.3} has a removable singularity at zero.
\end{Theorem}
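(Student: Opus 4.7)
The plan is to obtain Theorem~\ref{T2.2} as a direct specialization of Theorem~\ref{T2.1} to the nonlinearity $g(t) = t^{\lambda}$, so the entire task reduces to verifying that the hypotheses of Theorem~\ref{T2.1} correspond, term by term, to those stated in Theorem~\ref{T2.2}. First I would compute the auxiliary quantities attached to $g(t) = t^\lambda$: since $g'(t) = \lambda t^{\lambda - 1}$, one has $g'(0) = 0$ and $(g')^{-1}(\zeta) = (\zeta/\lambda)^{1/(\lambda - 1)}$, hence
$$
 g^{*}(\xi) = \frac{\lambda - 1}{\lambda}\,\lambda^{-1/(\lambda - 1)}\,\xi^{\lambda/(\lambda - 1)},
 \qquad
 \gamma(\xi) = \frac{g^{*}(\xi)}{\xi} = c_{\lambda}\,\xi^{1/(\lambda - 1)},
$$
with a constant $c_\lambda > 0$ depending only on $\lambda$.

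Next I would identify the functions $\rho$ and $h$ that enter condition~\eqref{1.4}. Since
$$
 g(\varepsilon r^{m - n} t) = \varepsilon^{\lambda}\,r^{\lambda (m - n)}\,t^{\lambda},
$$
inequality~\eqref{1.4} holds, in fact as an equality, with $\rho(r) = r^{\lambda(m - n)}$ and $h(t) = t^{\lambda}$; both functions have the required monotonicity/positivity.

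Now I would check the three hypotheses of Theorem~\ref{T2.1} for these choices. For~\eqref{T2.1.1}, substituting $h(t) = t^\lambda$ gives an integrand equal to $t^{(1 - \lambda)/(\lambda(m - 1) + 1) - 1}$, and the exponent is strictly less than $-1$ because $\lambda > 1$, so the integral converges. For~\eqref{T2.1.2}, with $\rho(r) = r^{\lambda(m - n)}$ one finds $r^{n - 1} q(r) = r^{\lambda(m - n) + n - 1} z(r)$, so the condition reduces verbatim to the divergence hypothesis of Theorem~\ref{T2.2}. Finally, for~\eqref{T2.1.4}, the identity $\gamma(1/f(x)) = c_\lambda f(x)^{-1/(\lambda - 1)}$ turns it into the integrability hypothesis $\int_{B_1} f^{-1/(\lambda - 1)}(x)\,dx < \infty$ of Theorem~\ref{T2.2}.

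There is no genuine obstacle here: the proof is a mechanical translation. The only points that require minor care are tracking the constant $c_\lambda$ (which is harmless, since all the conditions in Theorem~\ref{T2.1} are invariant under multiplication of $\gamma$, $\rho$, or $h$ by positive constants) and checking that the exponent in~\eqref{T2.1.1} indeed comes out negative enough, which is exactly the assumption $\lambda > 1$. Once these verifications are in place, an invocation of Theorem~\ref{T2.1} immediately yields the conclusion of Theorem~\ref{T2.2}.
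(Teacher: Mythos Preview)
Your proposal is correct and follows exactly the paper's own approach: the paper's proof of Theorem~\ref{T2.2} is a one-line application of Theorem~\ref{T2.1} with $h(t)=t^{\lambda}$, $\rho(r)=r^{\lambda(m-n)}$, and $q(r)=r^{\lambda(m-n)}z(r)$, and your write-up simply spells out the verifications that the paper leaves implicit.
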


\begin{Corollary}[H.~Bresis and L.~V\'eron~\cite{BV}]\label{C2.1}
Let $u \in C^2 (B_1  \setminus \{ 0 \})$ be a solution of~\eqref{C2.1.1},
where
\begin{equation}
	\lambda
	\ge
	\frac{n}{n - 2},
	\quad
	n \ge 3.
	\label{C2.1.2}
\end{equation}
Then $u \in C^2 (B_1)$ and, moreover,
\begin{equation}
	\Delta u = |u|^\lambda \sign u
	\quad
	\mbox{in } B_1.
	\label{C2.1.3}
\end{equation}
\end{Corollary}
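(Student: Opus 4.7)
The plan is to deduce the corollary from Theorem~\ref{T2.2} by passing to $v = |u|$ via Kato's inequality, then to extend the resulting weak equation across the origin via a cutoff argument, and finally to invoke standard elliptic bootstrap for $C^2$ regularity. First, since $u \in C^2 (B_1 \setminus \{ 0 \})$ solves \eqref{C2.1.1} classically, Kato's inequality (with the convention $\sign 0 := 0$) yields
$$
	\Delta |u|
	\ge
	(\sign u) \, \Delta u
	=
	|u|^\lambda
	\quad \mbox{in } \mathcal{D}' (B_1 \setminus \{ 0 \}).
$$
Hence $v = |u|$ is a weak solution of $\Delta v \ge v^\lambda$ in $B_1 \setminus \{ 0 \}$ in the sense of Definition~\ref{D1.1}, with $a_{2 e_i} (x, v) = v$ and $a_\alpha \equiv 0$ otherwise, satisfying $|a_\alpha (x, \zeta)| \le |\zeta|$. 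Applying Theorem~\ref{T2.2} with $f \equiv 1$ and $m = 2$ gives $z (r) \equiv 1$, so the divergence hypothesis reduces to $\int_0^1 r^{\lambda (2 - n) + n - 1} \, dr = \infty$, equivalent to $\lambda (n - 2) \ge n$, which is exactly \eqref{C2.1.2}; the integrability condition $\int_{B_1} 1^{-1/(\lambda - 1)} \, dx < \infty$ is trivial. Thus $|u|$ has a removable singularity, and $|u|, |u|^\lambda \in L^1 (B_1)$.

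Next, I would upgrade from the inequality for $|u|$ to the equation for $u$ on all of $B_1$ by a cutoff. Fix $\chi \in C^\infty (\mathbb{R}^n)$ with $\chi = 0$ on $B_1$ and $\chi = 1$ off $B_2$, and set $\chi_\varepsilon (x) = \chi (x / \varepsilon)$. For $\varphi \in C_0^\infty (B_1)$, the test function $\chi_\varepsilon \varphi \in C_0^\infty (B_1 \setminus \{ 0 \})$ is admissible in the weak form of \eqref{C2.1.1}; the commutator $\Delta (\chi_\varepsilon \varphi) - \chi_\varepsilon \Delta \varphi$ is supported in $B_{2 \varepsilon} \setminus B_\varepsilon$ with $L^\infty$ norm $O (\varepsilon^{-2})$. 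H\"older's inequality then gives
$$
	\frac{C}{\varepsilon^2} \int_{B_{2 \varepsilon}} |u| \, dx
	\le
	\frac{C}{\varepsilon^2}
	|B_{2 \varepsilon}|^{1 - 1/\lambda}
	\left( \int_{B_{2 \varepsilon}} |u|^\lambda \, dx \right)^{1/\lambda}
	=
	o \! \left( \varepsilon^{n(1 - 1/\lambda) - 2} \right),
$$
and the exponent $n(1 - 1/\lambda) - 2$ is non-negative precisely when $\lambda (n - 2) \ge n$. Sending $\varepsilon \to 0$ yields \eqref{C2.1.3} in $\mathcal{D}' (B_1)$.

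To reach $C^2$, I would invoke Kato on the full ball to obtain $\Delta |u| \ge |u|^\lambda \ge 0$ in $\mathcal{D}' (B_1)$, so $|u|$ admits a classically subharmonic, hence locally bounded, representative on $B_1$. Consequently $\Delta u = |u|^\lambda \sign u \in L^\infty_{\mathrm{loc}} (B_1)$, whence Calder\'on-Zygmund $W^{2, p}$ estimates give $u \in W^{2, p}_{\mathrm{loc}} (B_1)$ for every $p < \infty$, hence $u \in C^{1, \alpha}_{\mathrm{loc}} (B_1)$, and a final Schauder step promotes this to $u \in C^2 (B_1)$ satisfying \eqref{C2.1.3} pointwise. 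The subtlest point is the endpoint $\lambda = n/(n - 2)$ in the cutoff estimate: there the power of $\varepsilon$ is exactly zero, and vanishing of the commutator error relies on the $o(1)$ factor $\bigl( \int_{B_{2 \varepsilon}} |u|^\lambda \, dx \bigr)^{1/\lambda} \to 0$ coming from absolute continuity of the integral of $|u|^\lambda$.
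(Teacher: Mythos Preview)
Your proof is correct and follows the same overall strategy as the paper: Kato's inequality reduces to a nonnegative subsolution, Theorem~\ref{T2.2} gives removability and the $L^1$ bounds, subharmonicity then yields local boundedness, and elliptic regularity finishes. The implementations differ in two places. First, the paper works with $u_+$ and $u_-$ separately rather than with $|u|$, and it skips your explicit cutoff step for extending the equation across the origin: instead it obtains $u\in L_\infty(B_{1/2})$ directly from the maximum principle (via mollification), defines the Newtonian potential $v$ of $\varphi\,|u|^\lambda\sign u$, and uses that $u-v$ is a bounded harmonic function in $B_{1/2}\setminus\{0\}$, hence smooth across $0$. Second, for the final regularity it bootstraps through the potential ($u\in L_\infty\Rightarrow v\in C^1\Rightarrow u\in C^1\Rightarrow v\in C^2\Rightarrow u\in C^2$) rather than invoking Calder\'on--Zygmund and Schauder. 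Your cutoff-plus-$W^{2,p}$ route is arguably more robust (it does not rely on the explicit fundamental solution), while the paper's potential-theoretic argument avoids $L^p$ theory altogether. One small redundancy: your second appeal to Kato on the full ball is unnecessary, since the removability conclusion of Theorem~\ref{T2.2} already asserts $\Delta|u|\ge|u|^\lambda\ge 0$ in $\mathcal D'(B_1)$.
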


\begin{proof}
By the Kato theorem~\cite{Kato}, the function $u_+ (x) = \max \{ u (x), 0 \}$ 
is a weak solution of the inequality
$$
	\Delta u_+ \ge u_+^\lambda
	\quad
	\mbox{in } B_1 \setminus \{ 0 \}.
$$
Applying Theorem~\ref{T2.2}, we obtain that $u$ is also a weak solution of the inequality
$$
	\Delta u_+ \ge u_+^\lambda
	\quad
	\mbox{in } B_1.
$$
Since the right-hand side of the last expression is non-negative, we have
\begin{equation}
	\esssup_{
		B_{1 / 2}
	}
	u_+
	\le
	\sup_{
		\partial B_{1 / 2}
	}
	u_+;
	\label{PC2.1.1}
\end{equation}
therefore, $u_+ \in L_\infty (B_{1 / 2})$.
To verify the validity of~\eqref{PC2.1.1}, it suffices to take
$$
	u_{+\varepsilon} (x)
	=
	\int_{B_1}
	\omega_\varepsilon (x - y)
	u_+ (y)
	\,
	dy,
	\quad
	\varepsilon > 0,
$$
where
$$
	\omega_\varepsilon (x) 
	= 
	\frac{1}{\varepsilon^n} 
	w
	\left(
		\frac{x}{\varepsilon}
	\right),
	\quad
	\varepsilon > 0,
$$
are Steklov-Schwartz averaging kernels 
for some non-negative function $\omega \in C_0^\infty (B_1)$ such that
$$
	\int_{B_1}
	\omega
	\,
	d x
	=
	1.
$$
It is obvious that $u_{+\varepsilon} \in C^\infty (\overline{B_{1 / 2}})$ and, moreover,
$$
	\Delta u_{+\varepsilon} \ge 0
	\quad
	\mbox{in }
	B_{1 / 2}
$$
for all $\varepsilon \in (0, 1 / 2)$. Hence, using the maximum principle, we obtain
$$
	\sup_{
		B_{1 / 2}
	}
	u_{+\varepsilon}
	\le
	\sup_{
		\partial B_{1 / 2}
	}
	u_{+\varepsilon}
$$
for all $\varepsilon \in (0, 1 / 2)$.
In the limit as $\varepsilon \to +0$, this obviously yields~\eqref{PC2.1.1}.

Analogously, one can show that $u_- (x) = \max \{ - u (x), 0 \} \in L_\infty (B_{1 / 2})$; 
therefore, $u \in L_\infty (B_{1 / 2})$. Further, putting
$$
	v (x)
	=
	- \frac{
		1
	}{
		(n - 2) |S_1|
	}
	\int_{
		B_1
	}
	\frac{
		\varphi (y) 
		|u (y)|^\lambda \sign u (y)
		\, dy
	}{
		|x - y|^{n - 2}
	},
$$
where $|S_1|$ is a $(n - 1)$-dimensional volume of the unit sphere in ${\mathbb R}^n$ and
$\varphi \in C_0^\infty (B_1)$ is some function equal to one on $B_{1 / 2}$, 
we obtain $u - v \in C^\infty (B_{1 / 2})$ since $u - v$ is a bounded harmonic function in 
$B_{1 / 2} \setminus \{ 0 \}$.
The condition $u \in C^2 (B_1 \setminus \{ 0 \}) \cap L_\infty (B_{1 / 2})$ 
implies that $v \in C^1 (B_{1 / 2})$. 
Therefore, $u$ belongs to $C^2 (B_1 \setminus \{ 0 \}) \cap C^1 (B_{1 / 2})$.
This, in turn, implies that $v \in C^2 (B_{1 / 2})$ and, accordingly,  $u \in C^2 (B_1)$. 
Consequently, $u$ satisfies equation~\eqref{C2.1.3} in the classical sense.
\end{proof}

\begin{Remark}\label{R2.1}
Condition~\eqref{C2.1.2} also guarantees the removability 
of singularity at zero for weak non-negative solutions of~\eqref{R2.1.1}
since, in Theorem~\ref{T2.2}, it does not matter what sign the Laplace operator faces.
However, unlike~\eqref{C2.1.1}, we can not argue that these solutions belongs to $C^2 (B_1)$
even if~\eqref{R2.1.1} is understood in the classical sense.
In fact, if
$$
	\lambda 
	>
	\frac{n}{n - 2},	
$$
then~\eqref{R2.1.1} has a solution of the form
$$
	u (x) = c |x|^{- 2 / (\lambda - 1)},
$$
where $c > 0$ is some constant~\cite[Theorem~1.3]{GS}. 
The function $u$ is a weak solution of the equation
\begin{equation}
	- \Delta u = |u|^\lambda \sign u
	\quad
	\mbox{in } B_1,
	\label{R2.1.2}
\end{equation}
but it does not satisfy~\eqref{R2.1.2} in the classical sense.
\end{Remark}

Theorems~\ref{T2.1} and~\ref{T2.2} are proved in Section~\ref{proofOfTheorems}.
Now, let us demonstrate their exactness.

\begin{Example}\label{E2.1}
Consider the inequality
\begin{equation}
	\Delta^{m / 2} u \ge c |x|^s |u|^\lambda
	\quad
	\mbox{in } B_1 \setminus \{ 0 \},
	\quad
	c = const > 0,
	\label{E2.1.1}
\end{equation}
where $\lambda$ and $s$ are real numbers and $m$ is a positive even integer.
By Theorem~\ref{T2.2}, if
\begin{equation}
	\lambda > 1
	\quad
	\mbox{and}
	\quad
	s \le \lambda (n - m) - n,
	\label{E2.1.2}
\end{equation}
then any weak solution of~\eqref{E2.1.1} has a removable singularity at zero.
For $m = 2$, condition~\eqref{E2.1.2} coincides with the analogous condition
given in~\cite[Example~6.1.1]{meCMFI}.
In turn, if $m = 2$, $s = 0$, and $n \ge 3$, then~\eqref{E2.1.2} coincides with~\eqref{C2.1.2}. 

Let us examine the critical exponent $s = \lambda (n - m) - n$ 
in the right-hand side of~\eqref{E2.1.1}. 
Namely, assume that $u$ is a weak solution of the inequality
\begin{equation}
	\Delta^{m / 2} u 
	\ge 
	c |x|^{\lambda (n - m) - n} 
	\log^\nu \frac{1}{|x|}
	\,
	|u|^\lambda
	\quad
	\mbox{in } B_1 \setminus \{ 0 \},
	\quad
	c = const > 0,
	\label{E2.1.3}
\end{equation}
where $\lambda$ and $\nu$ are real numbers and $m$ is a positive even integer.
By Theorem~\ref{T2.2}, if
\begin{equation}
	\lambda > 1
	\quad
	\mbox{and}
	\quad
	\nu \ge - 1,
	\label{E2.1.4}
\end{equation}
then $u$ has a removable singularity at zero.
For $m = 2$, condition~\eqref{E2.1.4} coincides with the analogous condition obtained 
in~\cite[Example~6.1.2]{meCMFI}

It can be seen that, in the case of $\lambda \le 1$, 
for any $c$, $s$, and $\nu$ there exist real numbers $k > 0$ and $l > 0$ such that
\begin{equation}
	u (x)
	=
	e^{
		k / |x|^l
	}
	\label{E2.1.5}
\end{equation}
is a weak solution of both~\eqref{E2.1.1} and~\eqref{E2.1.3} with an unremovable singularity at zero.
Therefore, the first inequality in~\eqref{E2.1.2} and~\eqref{E2.1.4} is exact.

Assume now that $\lambda > 1$ and $\nu < -1$. Let us put
$$
	w_0 (r)
	=
	r^{-n}
	\log^{- (\nu + \lambda) / (\lambda - 1)} \frac{1}{r}
$$
and
$$
	w_i (r)
	=
	\frac{1}{n - 2}
	\int_r^1
	\left(
		\left(
			\frac{\zeta}{r}
		\right)^{n - 2}
		-
		1
	\right)
	\zeta
	w_{i - 1} (\zeta)
	\,
	d\zeta,
	\quad
	i = 1, \ldots, m / 2.
$$
It is obvious that
$$
	\Delta w_i (|x|) = w_{i - 1} (|x|)
	\quad
	\mbox{in } B_1 \setminus \{ 0 \},
	\quad
	i = 1, \ldots, m / 2.
$$
In the case of $m < n$, for any $1 \le i \le m / 2$ we also have
$$
	w_i (r) \sim r^{2 i - n} \log^{- (1 + \nu) / (\lambda - 1)} \frac{1}{r}
	\quad
	\mbox{as } r \to +0,
$$
or in other words,
$$
	c_1 r^{2 i - n} \log^{- (1 + \nu) / (\lambda - 1)} \frac{1}{r}
	\le
	w_i (r) 
	\le 
	c_2 r^{2 i - n} \log^{- (1 + \nu) / (\lambda - 1)} \frac{1}{r}
$$
with some constants $c_1 > 0$ and $c_2 > 0$ for all $r > 0$ in a neighborhood of zero.
Hence, for any $c > 0$ there are real numbers $\varepsilon > 0$ and $\delta > 0$ such that the function
\begin{equation}
	u (x)
	=
	\varepsilon
	w_{m / 2} 
	(
		\delta |x|
	)
	\label{E2.1.6}
\end{equation}
is a weak solution of~\eqref{E2.1.3} with an unremovable singularity at zero.
Thus, the second inequality in~\eqref{E2.1.4} is exact for all $m < n$.
Since solutions of~\eqref{E2.1.3} are also solutions of~\eqref{E2.1.1} for any $s > \lambda (n - m) - n$,
we have simultaneously showed the exactness of the second inequality in~\eqref{E2.1.2}.
\end{Example}

\begin{Example}\label{E2.2}
We examine the critical exponent $\lambda = 1$ in the right-hand side of~\eqref{E2.1.1}.
Consider the inequality
\begin{equation}
	\Delta^{m / 2} u 
	\ge 
	c |x|^s
	|u| \log^\nu (e + |u|)
	\quad
	\mbox{in } B_1 \setminus \{ 0 \},
	\quad 
	c = const > 0,
	\label{E2.2.1}
\end{equation}
where $\nu$ and $s$ are real numbers and $m \le n$ is a positive even integer.

By Theorem~\ref{T2.1}, if
\begin{equation}
	\nu > m
	\quad
	\mbox{and}
	\quad
	s \le - m,
	\label{E2.2.2}
\end{equation}
then any weak solution of~\eqref{E2.2.1} has a removable singularity at zero.
In fact, for any $\delta \in (0, 1)$ there exists $\kappa \in (0, \infty)$ such that
\begin{equation}
	\log (e + \varepsilon \tau)
	\ge
	\kappa
	\varepsilon^\delta
	\log (e + \tau)
	\label{E2.2.3}
\end{equation}
for all $\varepsilon \in (0, 1)$ and $\tau \in (0, \infty)$.
To establish the validity of the last inequality, we assume the converse. 
Then there are a sequences or real numbers $\varepsilon_i \in (0, 1)$ and $\tau_i \in (0, \infty)$ such that
\begin{equation}
	\log (e + \varepsilon_i \tau_i)
	<
	\frac{\varepsilon_i^\delta}{i}	
	\log (e + \tau_i),
	\quad
	i = 1,2,\ldots.
	\label{E2.2.4}
\end{equation}
It is clear that $\varepsilon_i \tau_i \to \infty$ as $i \to \infty$; 
otherwise there are subsequences $\varepsilon_{i_j}$ and $\tau_{i_j}$ such that
$\tau_{i_j} \le \beta / \varepsilon_{i_j}$ with some constant $\beta > 0$ for all $j = 1, 2, \ldots$.
Hence, taking into account~\eqref{E2.2.4}, we arrive at a contradiction.
In particular, one can assert that $\tau_i \to \infty$ as $i \to \infty$; 
therefore,~\eqref{E2.2.4} implies the inequality
$$
	\log \varepsilon_i + \log \tau_i
	<
	\frac{2 \varepsilon_i^\delta}{i}
	\log \tau_i
$$
for all sufficiently large $i$, whence it follows that
$$
	\log \tau_i
	<
	\frac{
		1
	}{
		1 - 2 \varepsilon_i^\delta / i
	}
	\log \frac{1}{\varepsilon_i}
$$
for all sufficiently large $i$.
Thus, 
$$
	\log (\varepsilon_i \tau_i)
	<
	\frac{
		2 \varepsilon_i^\delta / i
	}{
		1 - 2 \varepsilon_i^\delta / i
	}
	\log \frac{1}{\varepsilon_i}
	\to
	0
	\quad
	\mbox{as } i \to \infty
$$
and we again arrive at a condition.

From~\eqref{E2.2.3}, it follows that
$$
	\log (e + \varepsilon r^{m - n} t)
	\ge
	\kappa
	\varepsilon^\delta
	\log (e + r^{m - n} t)
	\ge
	\kappa
	\varepsilon^\delta
	\log (e + t)
$$
for all 
$\varepsilon \in (0, 1)$, 
$r \in (0, 1)$,
and 
$t \in (0, \infty)$.
Therefore, taking
$$
	g (\zeta) = \zeta \log^\nu (e + \zeta),
$$
we obtain~\eqref{1.4} with
$$
	\lambda = 1 + \delta \nu,
	\quad
	\rho (r) = \kappa^\nu r^{m - n},
	\quad
	\mbox{and}
	\quad
	h (t) = t \log^\nu (e + t).
$$

To complete our arguments, it is sufficient to note that~\eqref{E2.2.2} guarantees 
the validity of conditions~\eqref{T2.1.1}, \eqref{T2.1.2}, and \eqref{T2.1.4}
if $\delta$ is small enough.

We also note that both inequalities in~\eqref{E2.2.2} are exact.
In fact, if $\nu \le m$, then for any $c$ and $s$ there exist real numbers $k > 0$ and $l > 0$ 
such that
$$
	u (x)
	=
	e^{
		e^{
			k / |x|^l
		}
	}
$$
is a weak solution of~\eqref{E2.2.1} with an unremovable singularity at zero.
In turn, if $\nu > m$ and $s > - m$, then for any $c > 0$ there exists a real number $k > 0$ 
such that the function
$$
	u (x)
	=
	e^{
		k
		|x|^{
			(s + m) / (m - \nu)
		}
	}
$$
is a weak solution of~\eqref{E2.2.1} with an unremovable singularity at zero.
\end{Example}

\begin{Example}\label{E2.3}
Consider the inequality
\begin{equation}
	\Delta^{m / 2} u 
	\ge 
	c |x|^{\lambda (n - m) - n}
	|u|^\lambda \log^\nu (e + |u|)
	\quad
	\mbox{in } B_1 \setminus \{ 0 \},
	\quad 
	c = const > 0,
	\label{E2.3.1}
\end{equation}
where $\lambda$ and $\nu$ are real numbers and $m < n$ is a positive even integer.

We are interested in the case of $\lambda > 1$. By Theorem~\ref{T2.1}, if
\begin{equation}
	\nu \ge - 1,
	\label{E2.3.3}
\end{equation}
then any weak solution of~\eqref{E2.3.1} has a removable singularity at zero.
Indeed, let condition~\eqref{E2.3.3} be valid.
Without loss of generality, it can be assumed that $\nu < 0$; 
otherwise we replace $\nu$ by $-1$. 
After this replacement, inequality~\eqref{E2.3.1} obviously remains valid.

We have
$$
	\log (e + a b)
	\le
	\log (e + a^2)
	\log (e + b^2)
$$
for all real numbers $a > 0$ and $b > 0$. This allows us to assert that
$$
	\log (e + \varepsilon r^{m - n} t)
	\le
	\log (e + r^{m - n} t)
	\le
	\log (e + r^{2 (m - n)})
	\log (e + t^2)
$$
for all 
$\varepsilon \in (0, 1)$, 
$r \in (0, 1)$,
and 
$t \in (0, \infty)$.
Therefore, taking
$$
	g (\zeta)
	=
	\zeta^\lambda 
	\log^\nu (e + \zeta),
$$
we obtain~\eqref{1.4} with
$$
	\rho (r) 
	= 
	r^{\lambda (m - n)}
	\log^\nu (e + r^{2 (m - n)})
	\quad
	\mbox{and}
	\quad
	h (t)
	=
	t^\lambda
	\log^\nu (e + t^2).
$$
In so doing, it can be verified that~\eqref{T2.1.1}, \eqref{T2.1.2}, and \eqref{T2.1.4} hold.

Inequality~\eqref{E2.3.3} is exact for all $m < n$. 
In fact, if $\nu < -1$, then for any $\lambda > 1$ and $c > 0$ 
there exist $\varepsilon > 0$ and $\delta > 0$ such that
the function $u$ defined by~\eqref{E2.1.6} is a weak solution of~\eqref{E2.3.1} 
with an unremovable singularity at zero.
\end{Example}

\begin{Example}\label{E2.4}
Consider the first-order differential inequality
\begin{equation}
	- \sum_{i=1}^n
	\frac{\partial}{\partial x_i}
	\left(
		\frac{x_i}{|x|}
		u
	\right)
	\ge
	c
	|x|^s
	|u|^\lambda,
	\quad
	\mbox{in } B_1 \setminus \{ 0 \},
	\quad 
	c = const > 0,
	\label{E2.4.1}
\end{equation}
where $\lambda$ and $s$ are real numbers.
By Theorem~\ref{T2.2}, if
\begin{equation}
	\lambda > 1
	\quad
	\mbox{and}
	\quad
	s \le \lambda (n - 1) - n,
	\label{E2.4.2}
\end{equation}
then any weak solution of~\eqref{E2.4.1} has a removable singularity at zero.
In can easily be seen that condition~\eqref{E2.4.2} is exact. 
Indeed, if $\lambda \le 1$, then for any real numbers $c$ and $s$ 
there exist a weak solution of~\eqref{E2.4.1} with an unremovable singularity at zero.
As such a solution, we can take the function $u$ defined by~\eqref{E2.1.5},
where $k > 0$ and $l > 0$ are sufficiently large real numbers.
At the same time, if $\lambda > 1$ and $s > \lambda (n - 1) - n$, 
then for any $c > 0$ there exists $\varepsilon > 0$ such that
$$
	u (x) 
	= 
	\varepsilon 
	|x|^{
		- (s + 1) / (\lambda - 1)
	}
$$
is a weak solution of~\eqref{E2.4.1} with an unremovable singularity at zero.
\end{Example}

\section{Proof of Theorems~\ref{T2.1} and~\ref{T2.2}}\label{proofOfTheorems}

In this section, we assume that $u$ is a weak solution of inequality~\eqref{1.1}. 
Let us denote $\tau = \sigma^{1/2}$ and
$$
	E (r)
	=
	\int_{
		B_{1 / 2} \setminus B_r
	}
	f (x) g (|u|)
	\,
	dx,
	\quad
	0 < r < 1 / 2.
$$
If $E (r) = 0$ for all $r \in (0, 1 / 2)$, then $u = 0$ almost everywhere in $B_{1/2}$. 
In this case, $u$ obviously has a removable singularity at zero. 
Therefore, we can assume without loss of generality that $E (r_0) > 0$ for some $r_0 \in (0, 1 / 2)$.
By definition, put 
$$
	r_i
	=
	\inf 
	\{
		r \in (r_{i - 1} / \tau, r_{i - 1})
		:
		E (r)
		\le 
		2 E (r_{i - 1})
	\},
	\quad
	i = 1,2,\ldots.
$$
It does not present any particular problem to verify that $r_i \to 0$ as $i \to \infty$.

In all estimates given below, by $C$ and $k$ we mean omnifarious positive constants independent of $i$ and $j$.

\begin{Lemma}\label{L3.1}
For any integer $i \ge 0$ the estimate
\begin{equation}
	\int_{
		B_1 \setminus B_{1 / 2}
	}
	|u|
	\,
	dx
	+
	\frac{
		1
	}{
		(r_i - r_{i+1})^m
	}
	\int_{
		B_{r_i} \setminus B_{r_{i+1}}
	}
	|u|
	\,
	dx
	\ge
	C
	\int_{
		B_{1 / 2} \setminus B_{r_i}
	}
	f (x)
	g (|u|)
	\,
	dx
	\label{L3.1.1}
\end{equation}
is valid.
\end{Lemma}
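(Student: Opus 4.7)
The plan is to apply the weak formulation of inequality~\eqref{1.1} with a carefully designed non-negative test function $\varphi \in C_0^\infty(B_1 \setminus \{0\})$ that equals $1$ on the annular region $B_{1/2} \setminus B_{r_i}$ and vanishes outside a slightly larger set. Specifically, $\varphi$ should transition smoothly from $0$ to $1$ across the corona $B_{r_i} \setminus B_{r_{i+1}}$ and from $1$ back to $0$ across the corona $B_1 \setminus B_{1/2}$. One constructs such a $\varphi$ in the standard way by mollifying a radial piecewise-linear cutoff, so that $|\partial^\alpha \varphi| \le C/(r_i - r_{i+1})^m$ on $B_{r_i} \setminus B_{r_{i+1}}$, $|\partial^\alpha \varphi| \le C$ on $B_1 \setminus B_{1/2}$, and $\partial^\alpha \varphi \equiv 0$ elsewhere, for every multi-index $\alpha$ with $|\alpha| = m$.

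Substituting this $\varphi$ into~\eqref{1.2} and using the bound on the right-hand side gives
$$
\int_{B_{1/2} \setminus B_{r_i}} f(x) g(|u|)\,dx
\le
\int_{B_1} f(x) g(|u|) \varphi\,dx
\le
\sum_{|\alpha|=m} \int_{B_1} |a_\alpha(x,u)||\partial^\alpha \varphi|\,dx.
$$
Applying the growth hypothesis $|a_\alpha(x,\zeta)| \le A|\zeta|$ and splitting the support of $\partial^\alpha \varphi$ into the two coronas where the derivative estimates differ yields
$$
\sum_{|\alpha|=m} \int_{B_1} |a_\alpha(x,u)||\partial^\alpha \varphi|\,dx
\le
C \int_{B_1 \setminus B_{1/2}} |u|\,dx
+
\frac{C}{(r_i - r_{i+1})^m} \int_{B_{r_i} \setminus B_{r_{i+1}}} |u|\,dx,
$$
which is precisely~\eqref{L3.1.1}.

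The construction is routine in spirit, and I do not anticipate a serious obstacle; the only delicate point is making sure the cutoff is genuinely smooth and compactly supported away from the origin while retaining the two distinct scales of derivative bounds. This is handled, for example, by setting $\varphi = \psi_1 * \omega_\eta$, where $\psi_1$ is the indicator of an appropriate annulus slightly enlarged to permit the two transition layers, and $\omega_\eta$ is a standard radial mollifier with $\eta$ small compared to both $r_i - r_{i+1}$ and $1/2$. Once $\varphi$ is fixed, the derivation above is a one-line consequence of Definition~\ref{D1.1} and the $L^1$-integrability of $u$ and $f g(|u|)$ on $B_1 \setminus B_\varepsilon$ for any $\varepsilon > 0$ (here taken with $\varepsilon < r_{i+1}$).
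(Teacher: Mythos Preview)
Your approach is correct and essentially identical to the paper's, which simply writes the test function explicitly as
\[
\varphi(x)=\psi\!\left(\frac{|x|-r_{i+1}}{r_i-r_{i+1}}\right)\psi\bigl(2(1-|x|)\bigr)
\]
for a fixed $\psi\in C^\infty(\mathbb R)$ with $\psi|_{(-\infty,0]}=0$ and $\psi|_{[1,\infty)}=1$, and then invokes~\eqref{1.2} exactly as you do.

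One small technical caveat: the single-scale construction $\varphi=\psi_1*\omega_\eta$ you offer at the end will not deliver the two derivative bounds simultaneously. Convolving an indicator with a mollifier of width $\eta$ produces transition layers of width $\sim\eta$ on \emph{both} boundaries, with $m$th derivatives of size $\sim\eta^{-m}$ there; you cannot choose $\eta$ so that this is both $\le C(r_i-r_{i+1})^{-m}$ on the inner corona and $\le C$ (independent of $i$) on the outer one once $r_i-r_{i+1}$ is small. The product-of-two-rescaled-cutoffs construction above (or, equivalently, mollifying with two different scales on the two layers) resolves this without difficulty, so the gap is cosmetic rather than substantive.
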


\begin{proof}
It is sufficient to take
$$
	\varphi (x)
	=
	\psi
	\left(
		\frac{
			|x| - r_{i+1}
		}{
			r_i - r_{i+1}
		}
	\right)
	\psi
	(
		2 (1 - |x|)
	)
$$
as a test function in~\eqref{1.2}, where $\psi \in C^\infty ({\mathbb R})$ is 
a non-negative function such that
$$
	\left.
		\psi
	\right|_{
		(- \infty, 0]
	}
	=
	0
	\quad
	\mbox{and}
	\quad
	\left.
		\psi
	\right|_{
		[1, \infty)
	}
	=
	1.
$$
\end{proof}

\begin{Lemma}\label{L3.2}
Let 
$$
	\frac{
		1
	}{
		(r_{i_j} - r_{i_j + 1})^m
	}
	\int_{
		B_{r_{i_j}} \setminus B_{r_{i_j + 1}}
	}
	|u|
	\,
	dx
	\to
	0
	\quad
	\mbox{as } j \to \infty
$$
for some sequence of positive integers $\{ i_j \}_{j = 1}^\infty$.
Then $u$ has a removable singularity at zero.
\end{Lemma}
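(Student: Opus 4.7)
The plan is to verify directly the three requirements of Definition~\ref{D1.2}: $u \in L_1(B_1)$, $f(x) g(|u|) \in L_1(B_1)$, and the validity of the weak inequality~\eqref{1.2} for every non-negative $\varphi \in C_0^\infty(B_1)$.

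First, I would apply Lemma~\ref{L3.1} at the index $i = i_j$ and let $j \to \infty$. The second term on the left-hand side is exactly the quantity assumed to tend to zero, while $r_{i_j} \to 0$; hence monotone convergence upgrades the estimate to
$$
\int_{B_{1/2}} f(x) g(|u|)\, dx \le C^{-1} \int_{B_1 \setminus B_{1/2}} |u|\, dx < \infty,
$$
so $f g(|u|) \in L_1(B_1)$. Combining this with the Fenchel-Young identity applied to the pair $(|u|, 1/f)$,
$$
|u(x)| \le f(x) g(|u(x)|) + \gamma(1/f(x)),
$$
and the ambient integrability assumption~\eqref{T2.1.4} of $\gamma(1/f)$ from Theorem~\ref{T2.1}, gives $u \in L_1(B_1)$.

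Next, I would construct cut-offs that kill a shrinking neighbourhood of the origin. Take $\psi \in C^\infty(\mathbb R)$ with $\psi|_{(-\infty, 0]} \equiv 0$ and $\psi|_{[1,\infty)} \equiv 1$, and set
$$
\eta_j(x) = \psi\!\left(\frac{|x| - r_{i_j + 1}}{r_{i_j} - r_{i_j+1}}\right).
$$
Then $\eta_j$ vanishes on $B_{r_{i_j+1}}$, equals $1$ outside $B_{r_{i_j}}$, and satisfies $|\partial^\gamma \eta_j| \le C(r_{i_j} - r_{i_j+1})^{-|\gamma|}$ for $|\gamma|\le m$. For any non-negative $\varphi \in C_0^\infty(B_1)$ the product $\varphi\eta_j$ lies in $C_0^\infty(B_1 \setminus \{0\})$ and may be inserted into~\eqref{1.2}.

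Finally, I would expand $\partial^\alpha(\varphi \eta_j)$ by the Leibniz formula and pass to the limit $j \to \infty$. The right-hand side $\int f g(|u|)\varphi \eta_j\,dx \to \int f g(|u|)\varphi\,dx$ by dominated convergence with dominator $fg(|u|)\varphi \in L_1(B_1)$. The principal left-hand piece $\int a_\alpha(x,u)\partial^\alpha\varphi\cdot\eta_j\,dx$ tends to $\int a_\alpha(x,u)\partial^\alpha\varphi\,dx$ by dominated convergence with dominator $A|u|\,\|\partial^\alpha\varphi\|_\infty \in L_1(B_1)$. Every remaining piece carries at least one derivative of $\eta_j$, is supported in the annulus $B_{r_{i_j}}\setminus B_{r_{i_j+1}}$, and is majorised by
$$
C\|\varphi\|_{C^m}\,(r_{i_j}-r_{i_j+1})^{m-|\beta|}\cdot\frac{1}{(r_{i_j}-r_{i_j+1})^m}\int_{B_{r_{i_j}}\setminus B_{r_{i_j+1}}}|u|\,dx,
$$
which vanishes as $j\to\infty$ since $(r_{i_j}-r_{i_j+1})^{m-|\beta|}\le 1$ and the remaining factor tends to $0$ by assumption. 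The main obstacle is the step that yields $u \in L_1(B_1)$: knowing only $fg(|u|) \in L_1(B_1)$ is not enough to control $\int|u|$ in general, and the argument really needs Fenchel-Young together with the inherited integrability of $\gamma(1/f)$ from Theorem~\ref{T2.1}.
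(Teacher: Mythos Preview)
Your proof is correct and follows essentially the same route as the paper: Lemma~\ref{L3.1} plus the hypothesis gives $f g(|u|)\in L_1(B_1)$, Fenchel--Young together with~\eqref{T2.1.4} then yields $u\in L_1(B_1)$, and the same radial cut-offs $\psi_j$ combined with dominated convergence and the Leibniz expansion push the weak inequality through to all $\varphi\in C_0^\infty(B_1)$. The only difference is cosmetic --- you spell out the Leibniz terms and the factor $(r_{i_j}-r_{i_j+1})^{m-|\beta|}\le 1$ explicitly, whereas the paper bounds the whole remainder at once by $C(r_{i_j}-r_{i_j+1})^{-m}\int_{B_{r_{i_j}}\setminus B_{r_{i_j+1}}}|u|\,dx$.
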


\begin{proof}
Applying the Fenchel-Young inequality, we obtain
$$
	\int_{
		B_{1 / 2}
	}
	|u|
	\,
	dx
	\le
	\int_{
		B_{1 / 2}
	}
	f (x) 
	g (|u|)
	\,
	dx
	+
	\int_{
		B_{1 / 2}
	}
	f (x) 
	g^* 
	\left(
		\frac{1}{f (x)}
	\right)
	\,
	dx.
$$
In so doing,
$$
	\int_{
		B_{1 / 2}
	}
	f (x)
	g (|u|)
	\,
	dx
	<
	\infty
$$
according to Lemma~\ref{L3.1} and
$$
	\int_{
		B_1
	}
	f (x) 
	g^* 
	\left(
		\frac{1}{f (x)}
	\right)
	\,
	dx
	=
	\int_{B_1}
	\gamma
	\left(
		\frac{1}{f (x)}
	\right)
	dx
	<
	\infty
$$
according to condition~\eqref{T2.1.4}.
Therefore, one can assert that
$$
	\int_{
		B_{1 / 2}
	}
	|u|
	\,
	dx
	<
	\infty.
$$

Let $\psi$ be the function defined in the proof of Lemma~\ref{L3.1} and 
$\varphi \in C_0^\infty (B_1)$ an arbitrary non-negative function.
Put
$$
	\varphi_j (x)
	=
	\psi_j (x)
	\varphi (x),
$$
where 
$$
	\psi_j (x)
	=
	\psi
	\left(
		\frac{
			|x| - r_{i_j + 1}
		}{
			r_{i_j} - r_{i_j + 1}
		}
	\right),
	\quad
	j = 1, 2, \ldots.
$$
We obviously have
\begin{equation}
	\int_{B_1}
	\sum_{|\alpha| = m}
	(-1)^m
	a_\alpha (x, u)
	\partial^\alpha
	\varphi_j
	\,
	dx
	\ge
	\int_{B_1}
	f (x) g (|u|)
	\varphi_j
	\,
	dx,
	\quad
	j = 1, 2, \ldots.
	\label{PL3.2.1}
\end{equation}
From Lebesgue's bounded convergence theorem, it follows that
$$
	\int_{B_1}
	f (x) g (|u|)
	\varphi_j
	\,
	dx
	\to
	\int_{B_1}
	f (x) g (|u|)
	\varphi
	\,
	dx
	\quad
	\mbox{as } j \to \infty
$$
and
$$
	\int_{B_1}
	\sum_{|\alpha| = m}
	(-1)^m
	a_\alpha (x, u)
	\psi_j
	\partial^\alpha
	\varphi
	\,
	dx
	\to
	\int_{B_1}
	\sum_{|\alpha| = m}
	(-1)^m
	a_\alpha (x, u)
	\partial^\alpha
	\varphi
	\,
	dx
	\quad
	\mbox{as } j \to \infty.
$$
Since
\begin{align*}
	&
	\left|
		\int_{B_1}
		\sum_{|\alpha| = m}
		(-1)^m
		a_\alpha (x, u)
		\partial^\alpha
		\varphi_j
		\,
		dx
		-
		\int_{B_1}
		\sum_{|\alpha| = m}
		(-1)^m
		a_\alpha (x, u)
		\psi_j
		\partial^\alpha
		\varphi
		\,
		dx
	\right|
	\\
	&
	\quad
	{}
	\le
	\frac{
		C
	}{
		(r_{i_j} - r_{i_j + 1})^m
	}
	\int_{
		B_{r_{i_j}} \setminus B_{r_{i_j + 1}}
	}
	|u|
	\,
	dx
	\to
	0
	\quad
	\mbox{as } j \to \infty,
\end{align*}
we also obtain
$$
	\int_{B_1}
	\sum_{|\alpha| = m}
	(-1)^m
	a_\alpha (x, u)
	\partial^\alpha
	\varphi_j
	\,
	dx
	\to
	\int_{B_1}
	\sum_{|\alpha| = m}
	(-1)^m
	a_\alpha (x, u)
	\partial^\alpha
	\varphi
	\,
	dx
	\quad
	\mbox{as } j \to \infty.
$$
Thus,~\eqref{PL3.2.1} implies~\eqref{1.2}. The proof is completed.
\end{proof}

\begin{Lemma}\label{L3.3}
Let
\begin{equation}
	\liminf_{i \to \infty}
	\frac{
		1
	}{
		(r_i - r_{i+1})^m
	}
	\int_{
		B_{r_i} \setminus B_{r_{i+1}}
	}
	|u|
	\,
	dx
	>
	0,
	\label{L3.3.1}
\end{equation}
then
\begin{align}
	E (r_{i+1}) - E (r_i)
	\ge
	{}
	&
	C
	(r_i - r_{i+1})^{
		\lambda (m - 1) + 1
	} 
	r_i^{
		- \lambda (m - 1) + n - 1
	}
	\sup_{
		(r_{i+1} / \tau, r_i \tau)
	}
	\rho 
	\nonumber
	\\
	&
	{}
	\times
	\frac{
		\essinf_{
			B_{r_i} \setminus B_{r_{i+1}}
		} 
		f^\lambda 
	}{
		\esssup_{
			B_{r_i} \setminus B_{r_{i+1}}
		} 
		f^{\lambda - 1}
	}
	h (k E (r_{i+1}))
	\label{L3.3.2}
\end{align}
for all sufficiently large $i$.
\end{Lemma}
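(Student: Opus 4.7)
The plan is to bound $E(r_{i+1}) - E(r_i) = \int_{\Omega_i} f g(|u|)\,dx$ from below, where $\Omega_i := B_{r_i}\setminus B_{r_{i+1}}$. The main route is: first extract a lower bound on $\int_{\Omega_i}|u|\,dx$ from Lemma~\ref{L3.1} combined with assumption~\eqref{L3.3.1}; then pass from this $L^1$-bound to a bound on $g(\bar u)$ via Jensen's inequality (using $g''>0$) and the scaling hypothesis~\eqref{1.4}; and finally restore the weight $f$.

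The key preliminary claim is that for all sufficiently large $i$,
\[
\int_{\Omega_i} |u|\,dx \geq C\,(r_i - r_{i+1})^m\, E(r_{i+1}).
\]
This is proved by a dichotomy on the behaviour of $E(r_i)$. If $E(r_i)\to\infty$, the fixed term $\int_{B_1\setminus B_{1/2}}|u|\,dx$ in Lemma~\ref{L3.1} is eventually absorbed into $CE(r_i)$, giving $(r_i-r_{i+1})^{-m}\int_{\Omega_i}|u|\,dx \geq CE(r_i)/2$; the construction of $r_{i+1}$ together with continuity of $E$ ensures $E(r_{i+1})\leq 2E(r_i)$, so the claim follows. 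If $E(r_i)$ stays bounded, then so does $E(r_{i+1})$, and~\eqref{L3.3.1} itself gives $\int_{\Omega_i}|u|\,dx \geq c(r_i-r_{i+1})^m$, from which the claim follows by absorbing the uniform bound on $E(r_{i+1})$ into the constant.

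Next, Jensen's inequality yields $\int_{\Omega_i}g(|u|)\,dx \geq |\Omega_i|\,g(\bar u)$, where $\bar u := |\Omega_i|^{-1}\int_{\Omega_i}|u|\,dx$. Since $|\Omega_i| \sim r_i^{n-1}(r_i-r_{i+1})$, the preceding claim yields $\bar u \geq C(r_i-r_{i+1})^{m-1} r_i^{1-n}\,E(r_{i+1})$. For any $r_* \in (r_{i+1}/\tau, r_i\tau)$ the ratio $r_*/r_i$ is bounded away from $0$ and $\infty$ by $\sigma$-dependent constants, since $r_{i+1} > r_i/\tau$. I will set
\[
\varepsilon := \bigl((r_i-r_{i+1})/r_i\bigr)^{m-1} \in (0,1) \quad\text{and}\quad t := \bar u\, r_*^{n-m}/\varepsilon,
\]
so that $\varepsilon r_*^{m-n}t = \bar u$. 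A direct substitution of the above bound on $\bar u$ gives $t \geq kE(r_{i+1})$ for a positive constant $k$ independent of $i$ and $r_*$. Hypothesis~\eqref{1.4} together with monotonicity of $h$ then yields
\[
g(\bar u) = g(\varepsilon r_*^{m-n}t) \geq \varepsilon^\lambda \rho(r_*) h(t) \geq C\bigl((r_i-r_{i+1})/r_i\bigr)^{\lambda(m-1)}\rho(r_*)\,h(kE(r_{i+1})),
\]
and taking the supremum over $r_*$ converts $\rho(r_*)$ into $\sup_{(r_{i+1}/\tau, r_i\tau)}\rho$.

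Finally, the pointwise bound $f \geq \essinf_{\Omega_i} f$ gives $E(r_{i+1}) - E(r_i) \geq \essinf_{\Omega_i} f\cdot |\Omega_i|\, g(\bar u)$; collecting factors produces the desired estimate with $\essinf_{\Omega_i} f$ in front, which is upgraded to $\essinf_{\Omega_i} f^\lambda/\esssup_{\Omega_i} f^{\lambda-1}$ by the elementary inequality $\essinf f \geq (\essinf f)^\lambda/(\esssup f)^{\lambda-1}$, valid for $\lambda \geq 1$ since $\esssup f \geq \essinf f$ and both $t\mapsto t^\lambda$ and $t\mapsto t^{\lambda-1}$ are non-decreasing. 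The main technical obstacle is the preliminary claim on $\int_{\Omega_i}|u|\,dx$: neither Lemma~\ref{L3.1} alone nor~\eqref{L3.3.1} alone delivers a lower bound that is simultaneously proportional to $(r_i-r_{i+1})^m$ and to $E(r_{i+1})$, and the dichotomy on $E(r_i)$ is what makes the two inputs cooperate.
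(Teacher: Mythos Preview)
Your argument is correct and follows the same overall scheme as the paper: use Lemma~\ref{L3.1} together with~\eqref{L3.3.1} to get $\int_{\Omega_i}|u|\,dx\ge C(r_i-r_{i+1})^m E(r_{i+1})$, then apply Jensen and the structural hypothesis~\eqref{1.4}. The one place you diverge is the Jensen step. The paper applies Jensen with the \emph{weight} $f$,
\[
\frac{\int_{\Omega_i} f\, g(|u|)\,dx}{\int_{\Omega_i} f\,dx}\ \ge\ g\!\left(\frac{\int_{\Omega_i} f\,|u|\,dx}{\int_{\Omega_i} f\,dx}\right),
\]
and then chooses the parameter $\varepsilon$ in~\eqref{1.4} so that it contains the factor $\essinf f\big/\!\int_{\Omega_i} f$; after raising to the power $\lambda$ and multiplying back by $\int_{\Omega_i} f$, the ratio $\essinf f^{\lambda}/\esssup f^{\lambda-1}$ emerges directly. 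You instead use unweighted Jensen, which yields the cleaner (and formally stronger) factor $\essinf_{\Omega_i} f$, and then correctly weaken it to the stated ratio via $\essinf f\ge(\essinf f)^{\lambda}/(\esssup f)^{\lambda-1}$. Both routes are valid; yours is slightly more elementary and incidentally shows that~\eqref{L3.3.2} holds with $\essinf_{\Omega_i} f$ in place of the ratio. One small edge case: for $m=1$ your choice $\varepsilon=((r_i-r_{i+1})/r_i)^{m-1}=1$ sits on the boundary of the range $\varepsilon\in(0,1)$ required in~\eqref{1.4}; this is harmlessly repaired by taking $\varepsilon$ to be any fixed constant in $(0,1)$ and absorbing it into $k$.
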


\begin{proof}
In view of~\eqref{L3.3.1}, Lemma~\ref{L3.1} allows us to assert that
\begin{equation}
	\int_{
		B_{r_i} \setminus B_{r_{i+1}}
	}
	|u|
	\,
	dx
	\ge
	C
	(r_i - r_{i+1})^m
	E (r_i)
	\label{PL3.3.0}
\end{equation}
for all sufficiently large $i$.
In so doing, we admit that the constant $C > 0$ in the last expression can depend on the limit 
in the left-hand side of~\eqref{L3.3.1} and on the first summand in the left-hand side of~\eqref{L3.1.1}.
For us, it is only important that this constant does not depend on $i$.

Since
$$
	\int_{
		B_{r_i} \setminus B_{r_{i+1}}
	}
	f (x)
	|u|
	\,
	dx
	\ge
	\essinf_{
		B_{r_i} \setminus B_{r_{i+1}}
	}
	f
	\int_{
		B_{r_i} \setminus B_{r_{i+1}}
	}
	|u|
	\,
	dx,
$$
inequality~\eqref{PL3.3.0} implies the estimate
$$
	\int_{
		B_{r_i} \setminus B_{r_{i+1}}
	}
	f (x)
	|u|
	\,
	dx
	\ge
	C
	(r_i - r_{i+1})^m
	\essinf_{
		B_{r_i} \setminus B_{r_{i+1}}
	}
	f
	E (r_i),
$$
whence it follows that
\begin{equation}
	g
	\left(
		\frac{
			\int_{
				B_{r_i} \setminus B_{r_{i+1}}
			}
			f (x)
			|u|
			\,
			dx
		}{
			\int_{
				B_{r_i} \setminus B_{r_{i+1}}
			}
			f (x)
			\,
			dx
		}
	\right)
	\ge
	g
	\left(
		\frac{
			C
			(r_i - r_{i+1})^m
			\essinf_{
				B_{r_i} \setminus B_{r_{i+1}}
			}
			f
		}{
			\int_{
				B_{r_i} \setminus B_{r_{i+1}}
			}
			f (x)
			\,
			dx
		}
		E (r_i)
	\right)
	\label{PL3.3.1}
\end{equation}
for all sufficiently large $i$.

Let $i$ be a positive integer for which~\eqref{PL3.3.1} is valid.
We take $r \in (r_{i+1} / \tau, r_i \tau)$ satisfying the condition
$$
	\rho (r) 
	\ge
	\frac{1}{2}
	\sup_{
		(r_{i+1} / \tau, r_i \tau)
	}
	\rho.
$$
Since $g$ is a convex function, we have
\begin{equation}
	\frac{
		\int_{
			B_{r_i} \setminus B_{r_{i+1}}
		}
		f (x)
		g (|u|)
		\,
		dx
	}{
		\int_{
			B_{r_i} \setminus B_{r_{i+1}}
		}
		f (x)
		\,
		dx
	}
	\ge
	g
	\left(
		\frac{
			\int_{
				B_{r_i} \setminus B_{r_{i+1}}
			}
			f (x)
			|u|
			\,
			dx
		}{
			\int_{
				B_{r_i} \setminus B_{r_{i+1}}
			}
			f (x)
			\,
			dx
		}
	\right).
	\label{PL3.3.2}
\end{equation}
At the same time, it can be seen that
$$
	\int_{
		B_{r_i} \setminus B_{r_{i+1}}
	}
	f (x)
	\,
	dx
	\ge
	(r_i^n - r_{i+1}^n)
	|B_1|
	\essinf_{
		B_{r_i} \setminus B_{r_{i+1}}
	}
	f
	>
	(r_i - r_{i+1}) r_{i+1}^{n-1}
	|B_1|
	\essinf_{
		B_{r_i} \setminus B_{r_{i+1}}
	}
	f,
$$
where $|B_1|$ is the volume of the unit ball in ${\mathbb R}^n$; therefore, 
taking into account the inequalities $r_i - r_{i+1} < \sigma r$ and $r < \sigma r_{i+1}$, 
we obtain
$$
	\frac{
		(r_i - r_{i+1})^m
		\essinf_{
			B_{r_i} \setminus B_{r_{i+1}}
		}
		f
	}{
		\int_{
			B_{r_i} \setminus B_{r_{i+1}}
		}
		f (x)
		\,
		dx
	}
	<
	\frac{
		(r_i - r_{i+1})^{m - 1}
	}{
		r_{i+1}^{n-1}
		|B_1|
	}
	<
	\frac{
		\sigma^{m + n - 2}
		r^{m - n}
	}{
		|B_1|
	}.
$$
Thus, using condition~\eqref{1.4} with
$$
	\varepsilon
	=
	\frac{
		(r_i - r_{i+1})^m
		|B_1|
		\essinf_{
			B_{r_i} \setminus B_{r_{i+1}}
		}
		f
	}{
		\sigma^{m + n - 2}
		r^{m - n}
		\int_{
			B_{r_i} \setminus B_{r_{i+1}}
		}
		f (x)
		\,
		dx
	},
$$
we can estimate the right-hand side of~\eqref{PL3.3.1} as follows:
$$
	g
	\left(
		\frac{
			C
			(r_i - r_{i+1})^m
			\essinf_{
				B_{r_i} \setminus B_{r_{i+1}}
			}
			f
		}{
			\int_{
				B_{r_i} \setminus B_{r_{i+1}}
			}
			f (x)
			\,
			dx
		}
		E (r_i)
	\right)
	\ge
	\varepsilon^\lambda
	\rho (r)
	h 
	\left(
		\frac{
			C
			\sigma^{m + n - 2}
		}{
			|B_1|
		}
		E (r_i)
	\right).
$$
Combining this with~\eqref{PL3.3.1} and~\eqref{PL3.3.2}, one can conclude that
\begin{align*}
	\int_{
		B_{r_i} \setminus B_{r_{i+1}}
	}
	f (x)
	g (|u|)
	\,
	dx
	\ge
	{}
	&
	\frac{
		C
		(r_i - r_{i+1})^{\lambda m}
		r^{\lambda (n - m)}
		\rho (r)
		\essinf_{
			B_{r_i} \setminus B_{r_{i+1}}
		}
		f^\lambda 
	}{
		\left(
			\int_{
				B_{r_i} \setminus B_{r_{i+1}}
			}
			f (x)
			\,
			dx
		\right)^{\lambda - 1}
	}
	\\
	&
	{}
	\times
	h 
	\left(
		\frac{
			C
			\sigma^{m + n - 2}
		}{
			|B_1|
		}
		E (r_i)
	\right),
\end{align*}
whence due to the inequalities $r_i / \sigma < r < r_i \tau$, $2 E (r_i) \ge E (r_{i+1})$, and
$$
	\int_{
		B_{r_i} \setminus B_{r_{i+1}}
	}
	f (x)
	\,
	dx
	\le
	(r_i - r_{i+1}) r_i^{n-1}
	|B_1|
	\esssup_{
		B_{r_i} \setminus B_{r_{i+1}}
	}
	f
$$
we immediately arrive at~\eqref{L3.3.2}.
\end{proof}

From now on, we denote
$$
	\zeta_j 
	= 
	\tau^{- j} r_0,
	\quad
	j = 1, 2, \ldots.
$$

\begin{Lemma}\label{L3.4}
Let~\eqref{L3.3.1} hold, then there exists a positive integer $j_0$ such that for all $j > j_0$ 
at least one of the following two estimates is valid:
\begin{equation}
	\int_{
		E (\zeta_{j - 1})
	}^{
		E (\zeta_{j + 2})
	}
	\frac{
		d \zeta
	}{
		h (k \zeta)
	}
	\ge
	C
	\zeta_j^n
	\sup_{
		(\zeta_{j + 1}, \zeta_j)
	}
	\rho 
	\frac{
		\essinf_{
			B_{\zeta_{j - 1}} \setminus B_{\zeta_{j + 2}}
		} 
		f^\lambda 
	}{
		\esssup_{
			B_{\zeta_{j - 1}} \setminus B_{\zeta_{j + 2}}
		} 
		f^{\lambda - 1}
	},
	\label{L3.4.1}
\end{equation}
\begin{align}
	&
	\int_{
		E (\zeta_{j - 1})
	}^{
		E (\zeta_{j + 2})
	}
	h^{- 1 / (\lambda (m - 1) + 1)} (k \zeta)
	\zeta^{1 / (\lambda (m - 1) + 1) - 1}
	\,
	d \zeta
	\nonumber
	\\
	&
	\quad
	{}
	\ge
	C
	\left(
		\zeta_j^n
		\sup_{
			(\zeta_{j + 1}, \zeta_j)
		}
		\rho 
		\frac{
			\essinf_{
				B_{\zeta_{j - 1}} \setminus B_{\zeta_{j + 2}}
			} 
			f^\lambda 
		}{
			\esssup_{
				B_{\zeta_{j - 1}} \setminus B_{\zeta_{j + 2}}
			} 
			f^{\lambda - 1}
		}
	\right)^{ 
		1 / (\lambda (m - 1) + 1)
	}.
	\label{L3.4.2}
\end{align}
\end{Lemma}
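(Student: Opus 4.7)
The plan is to introduce, for each $j$ sufficiently large, a block of indices $i$ adapted to the scale $\zeta_j$, over which Lemma~\ref{L3.3} can be cleanly compared against the right-hand sides of \eqref{L3.4.1}--\eqref{L3.4.2}, and then to execute a geometric dichotomy controlled by the step size $r_i - r_{i+1}$.

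Write $p = \lambda(m-1) + 1$. I will set $i^-_j = \max\{i : r_i \geq \zeta_j\}$ and $i^+_j = \min\{i : r_i < \zeta_{j+1}\}$; using $r_{i+1} \geq r_i/\tau$ one checks that for every $i \in [i^-_j, i^+_j - 1]$ both $r_i, r_{i+1}$ lie in $(\zeta_{j+2}, \zeta_{j-1})$, that the interval $(r_{i+1}/\tau, r_i\tau)$ contains $(\zeta_{j+1}, \zeta_j)$, and that $B_{r_i}\setminus B_{r_{i+1}} \subset B_{\zeta_{j-1}}\setminus B_{\zeta_{j+2}}$. These three inclusions allow me to replace the data in Lemma~\ref{L3.3} by the data in \eqref{L3.4.1}--\eqref{L3.4.2} and produce, for every $i$ in the block,
\begin{equation*}
E(r_{i+1}) - E(r_i) \;\ge\; C(r_i - r_{i+1})^p r_i^{n-p}\, \tilde\rho_j\, \tilde F_j\, h(kE(r_{i+1})),
\end{equation*}
where $\tilde\rho_j = \sup_{(\zeta_{j+1},\zeta_j)}\rho$ and $\tilde F_j$ is the $f$-ratio appearing on the right of \eqref{L3.4.1}.

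I then fix a small threshold $c_0 > 0$ (with $c_0 < (1-1/\tau)/\tau$) so that any \emph{dyadic} step $r_{i+1} = r_i/\tau$ with $r_i \geq \zeta_{j+1}$ automatically gives $r_i - r_{i+1} \geq c_0\zeta_j$. In \emph{Alternative~1}, some $i$ in the block has $r_i - r_{i+1} \geq c_0 \zeta_j$; then $(r_i - r_{i+1})^p r_i^{n-p} \geq c\zeta_j^n$, and dividing the displayed inequality by $h(kE(r_{i+1}))$, using $\int_{E(r_i)}^{E(r_{i+1})} d\zeta/h(k\zeta) \geq (E(r_{i+1}) - E(r_i))/h(kE(r_{i+1}))$, and enlarging the integration interval to $[E(\zeta_{j-1}), E(\zeta_{j+2})]$ delivers \eqref{L3.4.1}. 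In \emph{Alternative~2} every $i$ in the block has $r_i - r_{i+1} < c_0\zeta_j$; by the choice of $c_0$ the dyadic case is excluded, and the infimum definition of $r_{i+1}$ forces $E(r_{i+1}) = 2E(r_i)$ for every such $i$. Taking the $p$-th root of the displayed inequality and using the doubling identity $(E(r_{i+1}) - E(r_i))^{1/p} = (2^{1/p} - 1)^{-1}(E(r_{i+1})^{1/p} - E(r_i)^{1/p})$ together with the monotonicity of $h$ will yield
\begin{equation*}
C(r_i - r_{i+1}) r_i^{(n-p)/p} (\tilde\rho_j \tilde F_j)^{1/p} \;\le\; \int_{E(r_i)}^{E(r_{i+1})} h^{-1/p}(k\zeta)\,\zeta^{1/p - 1}\,d\zeta.
\end{equation*}
Summing over $i \in [i^-_j, i^+_j - 1]$ telescopes the right-hand side into $\int_{E(\zeta_{j-1})}^{E(\zeta_{j+2})}$, while the left-hand side is at least $c(\tilde\rho_j \tilde F_j)^{1/p}\zeta_j^{n/p}$ thanks to $r_{i^-_j} - r_{i^+_j} \geq \zeta_j - \zeta_{j+1} \geq c\zeta_j$ and the comparability $r_i \sim \zeta_j$; this gives \eqref{L3.4.2}.

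The main obstacle is the integral comparison in Alternative~2: the bound $(E(r_{i+1}) - E(r_i))^{1/p} h^{-1/p}(kE(r_{i+1})) \leq C\int_{E(r_i)}^{E(r_{i+1})} h^{-1/p}(k\zeta)\zeta^{1/p-1}d\zeta$ is simply false with a uniform constant unless the ratio $E(r_{i+1})/E(r_i)$ stays bounded away from $1$. The threshold $c_0$ is introduced precisely to force the doubling $E(r_{i+1}) = 2E(r_i)$ in this alternative, and it is this doubling that makes the comparison available and thereby produces the exponent $1/p$ on the $\rho F$-factor in \eqref{L3.4.2}.
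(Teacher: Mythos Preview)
Your proof is correct and follows essentially the same approach as the paper. The paper defines the block $\Xi=\{i:(\zeta_{j+1},\zeta_j)\cap(r_{i+1},r_i)\neq\emptyset\}$ and uses the clean dichotomy ``some $i\in\Xi$ has $r_{i+1}=r_i/\tau$'' versus ``every $i\in\Xi$ has $r_{i+1}>r_i/\tau$'' (equivalently $r_i<\tau r_{i+1}$), which directly gives either a single step of size comparable to $\zeta_j$ or forces the doubling $E(r_{i+1})=2E(r_i)$ throughout; your threshold $c_0<(1-1/\tau)/\tau$ achieves exactly the same split, since Alternative~2 implies $r_{i+1}>r_i/\tau$ for each $i$ in your block, and Alternative~1 contains the paper's first case. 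The remaining estimates---the integral comparison in the single-step case, the use of doubling to bound $(E(r_{i+1})/h(kE(r_{i+1})))^{1/p}$ by $\int_{E(r_i)}^{E(r_{i+1})}h^{-1/p}(k\zeta)\zeta^{1/p-1}\,d\zeta$, the telescoping $\sum_i(r_i-r_{i+1})\ge\zeta_j-\zeta_{j+1}$, and the replacement of $\rho$ and the $f$-ratio via the inclusions $(\zeta_{j+1},\zeta_j)\subset(r_{i+1}/\tau,r_i\tau)$ and $B_{r_i}\setminus B_{r_{i+1}}\subset B_{\zeta_{j-1}}\setminus B_{\zeta_{j+2}}$---match the paper's argument line by line.
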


\begin{proof}
We take $j_0$ such that~\eqref{L3.3.2} is valid
for all $i$ satisfying the condition $r_i \le \zeta_{j_0}$.
In view of Lemma~\ref{L3.3}, such a $j_0$ obviously exists.
Assume further that $j > j_0$ is some integer. 
By $\Xi$ we denote the set of non-negative integers $i$ for which
${(\zeta_{j + 1}, \zeta_j)} \cap {(r_{i+1}, r_i)} \ne \emptyset$.

At first, let there be $i \in \Xi$ such that $r_i = \tau r_{i+1}$.
According to~\eqref{L3.3.2}, we have
$$
	\frac{
		E (r_{i+1}) - E (r_i)
	}{
		h (k E (r_{i+1}))
	}
	\ge
	C
	r_i^n
	\sup_{
		(r_{i+1} / \tau, r_i \tau)
	}
	\rho 
	\frac{
		\essinf_{
			B_{r_i} \setminus B_{r_{i+1}}
		} 
		f^\lambda 
	}{
		\esssup_{
			B_{r_i} \setminus B_{r_{i+1}}
		} 
		f^{\lambda - 1}
	}.
$$
Thus, to verify the validity of~\eqref{L3.4.1}, it suffices to use the inequalities
$$
	\int_{
		E (\zeta_{j - 1})
	}^{
		E (\zeta_{j + 2})
	}
	\frac{
		d \zeta
	}{
		h (k \zeta)
	}
	\ge
	\int_{
		E (r_i)
	}^{
		E (r_{i + 1})
	}
	\frac{
		d \zeta
	}{
		h (k \zeta)
	}
	\ge
	\frac{
		E (r_{i + 1}) - E (r_i)
	}{
		h (k E (r_{i + 1}))
	},
$$
\begin{equation}
	\sup_{
		(r_{i + 1} / \tau, r_i \tau)
	}
	\rho 
	\ge
	\sup_{
		(\zeta_{j + 1}, \zeta_j)
	}
	\rho,
	\label{PL3.4.1}
\end{equation}
and
\begin{equation}
	\frac{
		\essinf_{
			B_{r_i} \setminus B_{r_{i + 1}}
		} 
		f^\lambda 
	}{
		\esssup_{
			B_{r_i} \setminus B_{r_{i + 1}}
		} 
		f^{\lambda - 1}
	}
	\ge
	\frac{
		\essinf_{
			B_{\zeta_{j - 1}} \setminus B_{\zeta_{j + 2}}
		} 
		f^\lambda 
	}{
		\esssup_{
			B_{\zeta_{j - 1}} \setminus B_{\zeta_{j + 2}}
		} 
		f^{\lambda - 1}
	}
	\label{PL3.4.2}
\end{equation}
arising from the inclusions
$(r_{i + 1}, r_i) \subset (\zeta_{j + 2}, \zeta_{j - 1})$
and
$(\zeta_{j + 1}, \zeta_j) \subset {(r_{i + 1} / \tau, r_i \tau)}$.

Now, let $r_i < \tau r_{i+1}$ for all $i \in \Xi$. 
In this case, we have $E (r_{i+1}) = 2 E (r_i)$ for any $i \in \Xi$. 
Hence,~\eqref{L3.3.2} implies the estimate
\begin{align*}
	&
	\left(
		\frac{
			E (r_{i+1})
		}{
			h (k E (r_{i+1}))
		}
	\right)^{1 / (\lambda (m - 1) + 1)}
	\ge
	C
	(r_i - r_{i+1})
	\\
	&
	\qquad
	{}
	\times
	\left(
	r_i^{
		- \lambda (m - 1) + n - 1
	}
	\sup_{
		(r_{i+1} / \tau, r_i \tau)
	}
	\rho 
	\frac{
		\essinf_{
			B_{r_i} \setminus B_{r_{i+1}}
		} 
		f^\lambda 
	}{
		\esssup_{
			B_{r_i} \setminus B_{r_{i+1}}
		} 
		f^{\lambda - 1}
	}
	\right)^{ 
		1 / (\lambda (m - 1) + 1)
	}
\end{align*}
for all $i \in \Xi$ from which, taking into account~\eqref{PL3.4.1} and \eqref{PL3.4.2} and the inequalities
$$
	\int_{
		E (r_i)
	}^{
		E (r_{i + 1})
	}
	h^{- 1 / (\lambda (m - 1) + 1)} (k \zeta)
	\zeta^{1 / (\lambda (m - 1) + 1) - 1}
	\,
	d \zeta
	\ge
	C
	\left(
		\frac{
			E (r_{i+1})
		}{
			h (k E (r_{i+1}))
		}
	\right)^{1 / (\lambda (m - 1) + 1)}
$$
and $\zeta_j / \tau < r_i < \zeta_j \tau$, we obtain
\begin{align}
	&
	\int_{
		E (r_i)
	}^{
		E (r_{i + 1})
	}
	h^{- 1 / (\lambda (m - 1) + 1)} (k \zeta)
	\zeta^{1 / (\lambda (m - 1) + 1) - 1}
	\,
	d \zeta
	\ge
	C
	(r_i - r_{i+1})
	\nonumber
	\\
	&
	\qquad
	{}
	\times
	\left(
	\zeta_j^{
		- \lambda (m - 1) + n - 1
	}
		\sup_{
			(\zeta_{j + 1}, \zeta_j)
		}
		\rho 
		\frac{
			\essinf_{
				B_{\zeta_{j - 1}} \setminus B_{\zeta_{j + 2}}
			} 
			f^\lambda 
		}{
			\esssup_{
				B_{\zeta_{j - 1}} \setminus B_{\zeta_{j + 2}}
			} 
			f^{\lambda - 1}
		}
	\right)^{ 
		1 / (\lambda (m - 1) + 1)
	}
	\label{PL3.4.3}
\end{align}
for all $i \in \Xi$.
It is easy to see that
\begin{align*}
	&
	\int_{
		E (\zeta_{j - 1})
	}^{
		E (\zeta_{j + 2})
	}
	h^{- 1 / (\lambda (m - 1) + 1)} (k \zeta)
	\zeta^{1 / (\lambda (m - 1) + 1) - 1}
	\,
	d \zeta
	\\
	&
	{}
	\qquad
	{}
	\ge
	\sum_{
		i \in \Xi
	}
	\int_{
		E (r_i)
	}^{
		E (r_{i + 1})
	}
	h^{- 1 / (\lambda (m - 1) + 1)} (k \zeta)
	\zeta^{1 / (\lambda (m - 1) + 1) - 1}
	\,
	d \zeta
\end{align*}
and
$$
	\sum_{
		i \in \Xi
	}
	(r_i - r_{i + 1})
	\ge
	\zeta_j - \zeta_{j + 1}
	=
	\left(
		1
		-
		\frac{1}{\tau}
	\right)
	\zeta_j.
$$
Thus, summing~\eqref{PL3.4.3} over all $i \in \Xi$ we derive~\eqref{L3.4.2}.
\end{proof}

We also need the following known result proved in~\cite[Lemma~2.3]{meIzv}.

\begin{Lemma}\label{L3.5}
Let $\varphi : (0,\infty) \to (0,\infty)$ and $\psi : (0,\infty) \to (0,\infty)$
be measurable functions such that
$$
	\varphi (\zeta)
	\le
	\essinf_{
		(\zeta / \theta, \theta \zeta)
	}
	\psi
$$
with some real number $\theta > 1$ for almost all $\zeta \in (0, \infty)$.
Also assume that 
$0 < \mu \le 1$,
$M_1 > 0$,
$M_2 > 0$,
and
$\nu > 1$
are some real numbers with
$M_2 \ge \nu M_1$.
Then
$$
	\left(
		\int_{M_1}^{M_2}
		\varphi^{-\mu} (\zeta)
		\zeta^{\mu - 1}
		\,
		d \zeta
	\right)^{1 / \mu}
	\ge
	K
	\int_{M_1}^{M_2}
	\frac{
		d \zeta
	}{
		\psi (\zeta)
	},
$$
where the constant $K > 0$ depends only on $\mu$, $\nu$, and $\theta$.
\end{Lemma}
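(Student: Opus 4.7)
The plan is to reduce the integral inequality to a sum of localized inequalities on a geometric partition of $(M_1, M_2)$ and then aggregate via the elementary subadditivity $\sum_k c_k^\mu \ge \left(\sum_k c_k\right)^\mu$, which is valid for $\mu \in (0,1]$ and $c_k \ge 0$. First, I would choose an integer $N \ge 1$ and a ratio $\theta_0 \in (1, \theta]$ so that the points $t_k := \theta_0^k M_1$ form a partition $M_1 = t_0 < \cdots < t_N = M_2$. A convenient choice is $N = \lceil \log(M_2/M_1)/\log \theta \rceil$ and $\theta_0 = (M_2/M_1)^{1/N}$, which automatically gives $\theta_0 \in (1, \theta]$ and, because $M_2/M_1 \ge \nu$, also the quantitative lower bound $\theta_0 \ge \theta^{\log \nu / (\log \nu + \log \theta)}$ that depends only on $\nu$ and $\theta$.

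Since $\theta_0 \le \theta$, for every $\zeta$ in a slab $J_k := (t_k, t_{k+1})$ the inclusion $J_k \subset (\zeta/\theta, \theta\zeta)$ holds, so the hypothesis furnishes $\varphi(\zeta) \le m_k$ for almost every $\zeta \in J_k$, where $m_k := \essinf_{J_k} \psi$. Using the identity $\int_{J_k} \zeta^{\mu - 1}\, d\zeta = \mu^{-1} t_k^\mu (\theta_0^\mu - 1)$ together with the trivial companion bound $\int_{J_k} d\zeta/\psi(\zeta) \le (\theta_0 - 1) t_k/m_k$ would then yield the slab-by-slab inequality
$$
\int_{J_k} \varphi^{-\mu}(\zeta)\, \zeta^{\mu - 1}\, d\zeta \ge C_0 \left( \int_{J_k} \frac{d\zeta}{\psi(\zeta)} \right)^{\mu}, \qquad C_0 := \frac{\theta_0^\mu - 1}{\mu (\theta_0 - 1)^\mu},
$$
with $C_0$ positive and uniform in $k$.

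Summing these inequalities over $k = 0, \ldots, N - 1$ and invoking the subadditivity quoted above gives
$$
\int_{M_1}^{M_2} \varphi^{-\mu}(\zeta)\, \zeta^{\mu - 1}\, d\zeta \ge C_0 \left( \int_{M_1}^{M_2} \frac{d\zeta}{\psi(\zeta)} \right)^{\mu},
$$
so the claim follows with $K = C_0^{1/\mu}$. The main obstacle I anticipate is verifying that $C_0$ can be chosen uniformly in $M_1$ and $M_2$: the lower bound on $\theta_0$ noted above keeps $\theta_0$ in a compact subinterval of $(1, \theta]$ controlled by $\nu$ and $\theta$, which in turn yields a positive lower bound on $C_0$ depending only on $\mu$, $\nu$, and $\theta$. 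Apart from this book-keeping, the argument is routine; the restriction $\mu \le 1$ is used only through the subadditivity of $t \mapsto t^\mu$, so no H\"older-type reversal is needed.
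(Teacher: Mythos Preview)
Your argument is correct. The geometric partition with ratio $\theta_0\in(1,\theta]$ gives $J_k\subset(\zeta/\theta,\theta\zeta)$ for every $\zeta\in J_k$, so the hypothesis yields $\varphi(\zeta)\le m_k:=\essinf_{J_k}\psi$ a.e.\ on $J_k$; combining the two elementary bounds
\[
\int_{J_k}\varphi^{-\mu}(\zeta)\,\zeta^{\mu-1}\,d\zeta\ \ge\ \frac{\theta_0^{\mu}-1}{\mu}\,\frac{t_k^{\mu}}{m_k^{\mu}},
\qquad
\int_{J_k}\frac{d\zeta}{\psi(\zeta)}\ \le\ \frac{(\theta_0-1)t_k}{m_k}
\]
and then summing with the subadditivity $\sum c_k^{\mu}\ge(\sum c_k)^{\mu}$ for $0<\mu\le1$ is exactly right. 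Your care in bounding $\theta_0$ away from $1$ via $N=\lceil\log(M_2/M_1)/\log\theta\rceil$ and $M_2/M_1\ge\nu$ is the point where the dependence of $K$ on $\nu$ enters, and your computation $\theta_0\ge\theta^{\log\nu/(\log\nu+\log\theta)}$ is correct; this keeps $C_0=(\theta_0^{\mu}-1)/(\mu(\theta_0-1)^{\mu})$ uniformly positive.

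As for comparison with the paper: the paper does not actually prove Lemma~\ref{L3.5}. It is quoted as a known result from \cite[Lemma~2.3]{meIzv}, so there is no in-text argument to set yours against. Your proof is a clean, self-contained substitute for that citation and uses only the ingredients one would expect (dyadic-type decomposition plus concavity of $t\mapsto t^{\mu}$).
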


Is is easy to see that~\eqref{T2.1.1} implies the inequality
\begin{equation}
	\int_1^\infty
	h^{- 1 / (\lambda (m - 1) + 1)} (k \zeta)
	\zeta^{1 / (\lambda (m - 1) + 1) - 1}
	\,
	d \zeta
	< 
	\infty
	\label{PT2.1.1}
\end{equation}
for any real number $k > 0$.
To verify this, it is enough to make the change of variable $t = k \zeta$ in the left-hand side of~\eqref{T2.1.1}.
In turn,~\eqref{PT2.1.1} implies that
\begin{equation}
	\int_1^\infty
	\frac{
		d \zeta
	}{
		h (k \zeta)
	}
	<
	\infty
	\label{PT2.1.2}
\end{equation}
for any real number $k > 0$.
To show the validity of~\eqref{PT2.1.2}, it suffices to take 
$\mu = 1 / (\lambda (m - 1) + 1)$, 
$\theta = 2$, 
$\psi (\zeta) = h (k \zeta)$, 
and
$\varphi (\zeta) = h (k \zeta / 2)$
in Lemma~\ref{L3.5}.

\begin{proof}[Proof of Theorem~$\ref{T2.1}$]
Assume the converse. Let $u$ has an unremovable singularity at zero.
In this case, in view of Lemma~\ref{L3.2}, relation~\eqref{L3.3.1} holds.
Thus, by Lemma~\ref{L3.4}, there exists a positive integer $j_0$ such that
for all $j > j_0$ at list one of inequalities~\eqref{L3.4.1}, \eqref{L3.4.2} is valid.
We denote by $\Xi_1$ the set of integers $j > j_0$ for which~\eqref{L3.4.1} is valid. 
Also let $\Xi_2$ be the set of all the other integers $j > j_0$.

Since
\begin{equation}
	\zeta_j^n
	\sup_{
		(\zeta_{j + 1}, \zeta_j)
	}
	\rho 
	\frac{
		\essinf_{
			B_{\zeta_{j - 1}} \setminus B_{\zeta_{j + 2}}
		} 
		f^\lambda 
	}{
		\esssup_{
			B_{\zeta_{j - 1}} \setminus B_{\zeta_{j + 2}}
		} 
		f^{\lambda - 1}
	}
	\ge
	\int_{
		\zeta_{j + 1}
	}^{
		\zeta_j
	}
	r^{n-1}
	q (r)
	\,
	d r
	\label{PT2.1.3}
\end{equation}
for any $j > j_0$, summing~\eqref{L3.4.1} over all $j \in \Xi_1$, we have
\begin{equation}
	\int_{
		E (\zeta_{j_0})
	}^\infty
	\frac{
		d \zeta
	}{
		h (k \zeta)
	}
	\ge
	C
	\sum_{j \in \Xi_1}
	\int_{
		\zeta_{j + 1}
	}^{
		\zeta_j
	}
	r^{n - 1}
	q (r)
	\,
	d r.
	\label{PT2.1.4}
\end{equation}
At the same time, summing~\eqref{L3.4.2} over all $j \in \Xi_2$, one can conclude that
\begin{align*}
	&
	\int_{
		E (\zeta_{j_0})
	}^\infty
	h^{- 1 / (\lambda (m - 1) + 1)} (k \zeta)
	\zeta^{1 / (\lambda (m - 1) + 1) - 1}
	\,
	d \zeta
	\\
	&
	\quad
	{}
	\ge
	C
	\sum_{l \in \Xi_2}
	\left(
		\zeta_j^n
		\sup_{
			(\zeta_{l + 1}, \zeta_j)
		}
		\rho 
		\frac{
			\essinf_{
				B_{\zeta_{j - 1}} \setminus B_{\zeta_{j + 2}}
			} 
			f^\lambda 
		}{
			\esssup_{
				B_{\zeta_{j - 1}} \setminus B_{\zeta_{j + 2}}
			} 
			f^{\lambda - 1}
		}
	\right)^{ 
		1 / (\lambda (m - 1) + 1)
	},
\end{align*}
whence in accordance with~\eqref{PT2.1.3} and the inequality
\begin{align*}
	&
	\sum_{j \in \Xi_2}
	\left(
		\zeta_j^n
		\sup_{
			(\zeta_{j + 1}, \zeta_j)
		}
		\rho 
		\frac{
			\essinf_{
				B_{\zeta_{j - 1}} \setminus B_{\zeta_{j + 2}}
			} 
			f^\lambda 
		}{
			\esssup_{
				B_{\zeta_{j - 1}} \setminus B_{\zeta_{j + 2}}
			} 
			f^{\lambda - 1}
		}
	\right)^{ 
		1 / (\lambda (m - 1) + 1)
	}
	\\
	&
	\qquad
	{}
	\ge
	\left(
		\sum_{j \in \Xi_2}
		\zeta_j^n
		\sup_{
			(\zeta_{j + 1}, \zeta_j)
		}
		\rho 
		\frac{
			\essinf_{
				B_{\zeta_{j - 1}} \setminus B_{\zeta_{j + 2}}
			} 
			f^\lambda 
		}{
			\esssup_{
				B_{\zeta_{j - 1}} \setminus B_{\zeta_{j + 2}}
			} 
			f^{\lambda - 1}
		}
	\right)^{ 
		1 / (\lambda (m - 1) + 1)
	}
\end{align*}
it follows that
$$
	\left(
		\int_{
			E (\zeta_{j_0})
		}^\infty
		h^{- 1 / (\lambda (m - 1) + 1)} (k \zeta)
		\zeta^{1 / (\lambda (m - 1) + 1) - 1}
		\,
		d \zeta
	\right)^{
		\lambda (m - 1) + 1
	}
	\ge
	C
	\sum_{j \in \Xi_2}
	\int_{
		\zeta_{j + 1}
	}^{
		\zeta_j
	}
	r^{n-1}
	q (r)
	\,
	d r.
$$
Thus, summing the last estimate with~\eqref{PT2.1.4}, we obtain
\begin{align*}
	&
	\int_{
		E (\zeta_{j_0})
	}^\infty
	\frac{
		d \zeta
	}{
		h (k \zeta)
	}
	+
	\left(
		\int_{
			E (\zeta_{j_0})
		}^\infty
		h^{- 1 / (\lambda (m - 1) + 1)} (k \zeta)
		\zeta^{1 / (\lambda (m - 1) + 1) - 1}
		\,
		d \zeta
	\right)^{
		\lambda (m - 1) + 1
	}
	\\
	&
	\qquad
	{}
	\ge
	C
	\int_0^{
		\zeta_{j_0}
	}
	r^{n - 1}
	q (r)
	\,
	d r.
\end{align*}
In view of~\eqref{PT2.1.1} and~\eqref{PT2.1.2}, this contradicts~\eqref{T2.1.2}.
\end{proof}

\begin{proof}[Proof of~Theorem~$\ref{T2.2}$]
We take 
$h (t) = t^\lambda$, 
$\rho (r) = r^{\lambda (m - n)}$, 
and 
$q (r) = r^{\lambda (m - n)} z (r)$
in Theorem~\ref{T2.1}.
\end{proof}


\begin{thebibliography}{100}
\bibitem{BN}
H.~Brezis, L.~Nirenberg,
Removable singularities for nonlinear elliptic equations,
Topological methods in nonlinear analysis 9 (1997) 201--219.
\bibitem{BV}
H.~Brezis, L.~V\'eron, 
Removable singularities of some nonlinear elliptic equations, 
Arch. Ration. Mech. Anal. 75 (1980) 1--6.
\bibitem{GS}
B.~Gidas, J.~Spruck,
Global and local behavior of positive solutions of nonlinear elliptic equations,
Comm. Pure Appl. Math. 34 (1981) 525--598.
\bibitem{KL}
V.A.~Kondratiev, E.M.~Landis,
On qualitative properties of solutions of a nonlinear equation of second order,
Mat. sb. 135 (3) (1988) 346--360.
\bibitem{meCMFI}
A.A.~Kon'kov, 
Behavior of solutions of quasilinear elliptic inequalities, 
Contemporary Mathematics. Fundamental Intersections. Vol. 7. Moscow, 2004.
\bibitem{KSS}
A.A.~Kovalevsky, I.I.~Skrypnik, A.E.~Shishkov,
Singular solutions of nonlinear elliptic and parabolic equations.
De Gruyter Series in Nonlinear Analysis and Applications, 24. De Gruyter, Berlin, 2016.
\bibitem{LS}
V.~Liskevich, I.I.~Skrypnik,
Isolated singularities of solutions to quasi-linear elliptic equations with absorption,
J. Math. Anal. Appl. 338 (2008), 536--544.
\bibitem{MS}
M.~Marcus, A.~Shishkov,
Fading absorption in non-linear elliptic equations,
Ann. Inst. H.~Poincar\'e (C) 30 (2013) 315--336.
\bibitem{ShS}
M.A.~Shan, I.I.~Skrypnik,
Keller-Osserman estimates and removability result for the anisotropic porous medium equation 
with gradient absorption term,
Math. Nachr. 292 (2019), no. 2, 436--453.
\bibitem{SV}
A.~Shishkov, L.~V\'eron,
Diffusion versus absorption in semilinear elliptic equations,
J. Math. Anal. Appl. 352 (2009) 206--217.
\bibitem{VV}
J.L.~Vazquez, L.~V\'eron, 
Isolated singularities of some semilinear elliptic equations, 
J. Diff. Eq. 60 (1985) 301--322.
\bibitem{VBook2}
L.~V\'eron,
Local and global aspects of quasilinear degenerate elliptic equations: quasilinear elliptic
singular problems, 
World Scientific, New Jersey, 2017.
\bibitem{VBook1}
L.~V\'eron,
Singularities of Solutions of Second Order Quasilinear Equations,
Addison Wesley Longman Ltd. (1996).
\bibitem{K}
V.A.~Kondratiev,
On qualitative properties of solutions of semilinear elliptic equations,
J. Math. Sci. 69 (1994) 1068--1071.
\bibitem{KE}
V.A.~Kondratiev, S.D.~Eidelman,
Positive solutions of quasilinear Emden-Fowler systems of arbitrary order, 
Russian J. Math. Phys. 2 (1995) 537--540.
\bibitem{Kato}
T. Kato,
Schr\"odinger operators with singular potentials, 
Israel J. Math. 13 (1972), 135--148.
\bibitem{meIzv}
A.A.~Kon'kov,
On solutions of non-autonomous ordinary differential equations,
Izv. Math. 65 (2001) 285--327.
\end{thebibliography}
\end{document}